\DeclareMathOperator{\diag}{diag}
\DeclareMathOperator{\supp}{supp}
\newcommand{\abs}[1]{\lvert#1\rvert}
\newcommand{\Abs}[1]{\left\lvert#1\right\rvert}
\newcommand{\norm}[1]{\lVert#1\rVert}
\newcommand{\bbT}{{\mathbb T}}
\newcommand{\bbR}{{\mathbb R}}
\newcommand{\bbC}{{\mathbb C}}
\newcommand{\bbN}{{\mathbb N}}
\newcommand{\bbZ}{{\mathbb Z}}
\newcommand{\bbD}{{\mathbb D}}
\newcommand{\calH}{{\mathcal H}}
\newcommand{\Sch}{\mathbf{S}}
\numberwithin{equation}{section}
\theoremstyle{plain}
\newtheorem{theorem}{\bf Theorem}[section]
\newtheorem*{theorem*}{Theorem 1.1$'$}
\newtheorem{lemma}[theorem]{\bf Lemma}
\newtheorem{corollary}[theorem]{\bf Corollary}
\theoremstyle{definition}
\theoremstyle{remark}
\newtheorem*{remark*}{\bf Remark}
\newtheorem{remark}[theorem]{\bf Remark}
\newcommand{\wt}{\widetilde}
\newcommand{\eps}{\varepsilon}
\newcommand{\1}{\mathbbm{1}}
\renewcommand{\[}{\begin{equation}}
\renewcommand{\]}{\end{equation}}
\newcommand{\f}{{\varphi}}
\begin{document}

\title[Spectral asymptotics for Toeplitz operators]{Spectral asymptotics for a class of Toeplitz operators on the Bergman space}

\author{Alexander Pushnitski}


\subjclass[2010]{47B32, 47B36}

\keywords{Toeplitz operators, Bergman space, banded matrix, spectral asymptotics}

\begin{abstract}
We consider a class of compact Toeplitz operators on the Bergman space on the unit disk. 
The symbols of operators in our class are assumed to have a sufficiently regular power-like behaviour
near the boundary of the disk. We compute the asymptotics of singular values of this class
of Toeplitz operators. We use this result to obtain the asymptotics of singular values for 
a class of banded matrices. 
\end{abstract}

\maketitle

\section{Introduction and main results}\label{sec.a}
\subsection{Introduction}
Let $\bbD$ be the unit disc in the complex plane, and let 
$L^2(\bbD)$ be the Hilbert space of all square integrable functions 
with respect to the normalised Lebesgue area measure on $\bbD$.  
Next, let $B^2(\bbD)$ be the Bergman space, i.e. the 
closure of the linear span of the functions $\{z^n\}_{n=0}^\infty$ 
in $L^2(\bbD)$. We denote by $P:L^2(\bbD)\to B^2(\bbD)$ the 
Bergman projection, i.e. the orthogonal projection in $L^2$ onto $B^2$. 
For a \emph{symbol} $\varphi\in L^\infty(\bbD)$, 
the Toeplitz operator $T(\varphi)$ in $B^2(\bbD)$
is defined by
$$
T(\varphi)f=P(\varphi\cdot f), \quad f\in B^2(\bbD). 
$$
It is well known that if $\varphi(z)\to0$ as $\abs{z}\to1$, then $T(\varphi)$ is compact. 
Moreover, roughly speaking, the rate of convergence of $\varphi(z)\to0$ as $\abs{z}\to1$
determines the rate of convergence of the sequence of singular values 
$s_n(T(\varphi))\to 0$ as $n\to\infty$.
For radial symbols $\varphi(z)=\varphi(\abs{z})$ it is very easy to make this statement
precise. Indeed, in this case the Toeplitz operator $T(\varphi)$ is diagonal in the standard
orthonormal basis $\{\sqrt{n+1}z^n\}_{n=0}^\infty$ of $B^2(\bbD)$, and so the sequence
of the singular values  of $T(\varphi)$ is given by 
$$
s_n(T(\varphi))
=
(n+1)\Abs{(T(\varphi)z^n,z^n)_{L^2(\bbD)}}
=
2(n+1)\Abs{\int_0^1 r^{2n}\varphi(r) rdr}. 
$$
Specialising to the power behaviour $\varphi(r)=(1-r)^\gamma$, $\gamma>0$, by 
an elementary calculation one obtains
\[
s_n(T(\varphi))
=
2^{-\gamma}\Gamma(\gamma+1)n^{-\gamma} +O(n^{-1-\gamma}), \quad n\to\infty.
\label{a1b}
\]
The purpose of this paper is 

(i) to present a method that allows one to extend this calculation to the symbols $\varphi$ 
that have some sufficiently regular angular dependence; 

(ii) to give an application to the spectral analysis of banded matrices.

\subsection{Spectral asymptotics for Toeplitz operators}

\begin{theorem}\label{thm.a1}
Let $\f\in L^\infty(\bbD)$, and let $\gamma>0$. 
Assume that for some continuous function $\f_\infty$ on the unit circle one has 
$$
\sup_{0<\theta\leq 2\pi}\abs{(1-r)^{-\gamma}\f(re^{i\theta})-\f_\infty(e^{i\theta})}\to0,
\quad r\to1.
$$ 
Then the singular values of $T(\f)$ satisfy
$$
s_n(T(\f))
=
C_\gamma(\f)n^{-\gamma}+o(n^{-\gamma}), \quad n\to\infty,
$$
where 
$$
C_\gamma(\f)=2^{-\gamma}\Gamma(\gamma+1)
\biggl(\int_0^{2\pi}\abs{\f_\infty(e^{i\theta})}^{1/\gamma}\frac{d\theta}{2\pi}\biggr)^\gamma.
$$
\end{theorem}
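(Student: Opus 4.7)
The plan is to split $\f = \f_0 + \eta$ with $\f_0(re^{i\theta}) := (1-r)^\gamma \f_\infty(e^{i\theta})$ carrying the boundary-principal part; by hypothesis, $\sup_\theta (1-r)^{-\gamma} |\eta(re^{i\theta})| \to 0$ as $r \to 1$. The first key ingredient I would establish is a weak-Schatten upper bound: if $\psi \in L^\infty(\bbD)$ satisfies $|\psi(z)| \leq M(1-|z|)^\gamma$ for a.e. $z$, then $\limsup_{n\to\infty} n^\gamma s_n(T(\psi)) \leq C(\gamma)\,M$. The natural route is a dyadic decomposition of the disc into annuli $\{1 - 2^{-k-1} \leq |z| \leq 1 - 2^{-k}\}$: on the $k$-th annulus the Toeplitz contribution is effectively of rank $\lesssim 2^k$ and norm $\lesssim M\,2^{-k\gamma}$, and Ky Fan's inequality $s_{j+\ell-1}(A+B) \leq s_j(A) + s_\ell(B)$ assembles these pieces into the bound. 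Splitting $\eta = \eta\chi_{|z|>1-\delta} + \eta\chi_{|z|\leq 1-\delta}$ and applying this estimate to the first piece with $M = \sup_{|z|>1-\delta}|\eta|/(1-|z|)^\gamma \to 0$ as $\delta \to 0$, while the second piece is compactly supported (so its Toeplitz operator is trace-class with exponentially decaying singular values), yields $s_n(T(\eta)) = o(n^{-\gamma})$. One further Ky Fan step reduces the theorem to the analysis of $T(\f_0)$.

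Next, I would approximate $\f_\infty$ uniformly on $\bbT$ by trigonometric polynomials $P_N$ with $\|\f_\infty - P_N\|_\infty < \varepsilon$. The same Schatten-type bound applied to $(1-r)^\gamma(\f_\infty - P_N)$ gives $\limsup n^\gamma s_n \leq C\varepsilon$, while $C_\gamma(\cdot)$ is continuous in its argument through the $L^{1/\gamma}(\bbT)$ norm (itself controlled by the sup norm). Letting $\varepsilon \to 0$ reduces the theorem to the case when $\f_\infty$ is a fixed trigonometric polynomial.

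For $\f_\infty = \sum_{|k| \leq N} c_k e^{ik\theta}$, the operator $A := T(\f_0)$ has a banded matrix in the orthonormal basis $e_n := \sqrt{n+1}\,z^n$: a direct computation gives $(A e_n, e_m)_{L^2} = 2 c_{m-n}\sqrt{(m+1)(n+1)}\, B(n+m+2, \gamma+1)$ for $|m-n| \leq N$ and zero otherwise. Using $B(n+m+2, \gamma+1) \sim \Gamma(\gamma+1)(n+m+2)^{-\gamma-1}$ together with $\sqrt{(m+1)(n+1)} = (n+m+2)/2 + O((n+m)^{-1})$ when $|m-n|\leq N$, one identifies $A$, modulo a remainder contributing only $o(n^{-\gamma})$ to the singular values, with the model operator $2^{-\gamma}\Gamma(\gamma+1)\,\Lambda^{-\gamma/2}\,\mathcal{T}(\f_\infty)\,\Lambda^{-\gamma/2}$, where $\Lambda e_n = (n+1)e_n$ and $\mathcal{T}(\f_\infty)$ is the Hardy-space Toeplitz operator with symbol $\f_\infty$ on $H^2(\bbT) \simeq \ell^2(\bbZ_{\geq 0})$ under $e_n \leftrightarrow e^{in\theta}$.

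The main obstacle is converting this model identification into a pointwise singular-value asymptotic with the correct constant $C_\gamma(\f_\infty)$. Heuristically, the model is a semiclassical quantization of the phase-space symbol $\sigma(n,\theta) := (n+1)^{-\gamma}|\f_\infty(e^{i\theta})|$, and a Weyl-type count gives $\#\{(n,\theta) : c\,\sigma(n,\theta) > \lambda\} \sim \lambda^{-1/\gamma} c^{1/\gamma} \int_0^{2\pi}|\f_\infty|^{1/\gamma}\,d\theta/(2\pi)$ with $c = 2^{-\gamma}\Gamma(\gamma+1)$, which upon inversion reproduces $C_\gamma(\f_\infty)\,n^{-\gamma}$. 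To make this rigorous I would use a Widom-type commutator calculus on $H^2(\bbT)$ — modulo Schatten errors of better order, $\mathcal{T}(\bar\f_\infty)\mathcal{T}(\f_\infty) \approx \mathcal{T}(|\f_\infty|^2)$ (the discrepancy is a Hankel product, which is finite rank for polynomial $\f_\infty$) — combined with the approximate commutativity of $\Lambda^{-\gamma}$ with $\mathcal{T}(\f_\infty)$ (since $[\Lambda^{-\gamma},\mathcal{T}(\f_\infty)]$ has matrix entries of size $O((n+1)^{-\gamma-1})$, one order better than the principal term). Iterating then yields $(A^*A)^k \approx c^{2k}\Lambda^{-k\gamma}\,\mathcal{T}(|\f_\infty|^{2k})\,\Lambda^{-k\gamma}$, and tracing gives $\Tr((A^*A)^k) \sim c^{2k}\bigl(\int_0^{2\pi}|\f_\infty|^{2k}\,d\theta/(2\pi)\bigr)\sum_n(n+1)^{-2k\gamma}$; a Birman-Solomyak or Tauberian step then passes from these integer-$k$ trace asymptotics to the pointwise statement $s_n(A) \sim C_\gamma(\f_\infty)\,n^{-\gamma}$.
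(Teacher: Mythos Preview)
Your reduction steps --- splitting off the remainder $\eta$, approximating $\varphi_\infty$ uniformly by trigonometric polynomials, and identifying the resulting banded matrix with the model $c\,\Lambda^{-\gamma/2}\mathcal{T}(\varphi_\infty)\Lambda^{-\gamma/2}$ modulo $\Sch_{1/\gamma,\infty}^0$ --- are correct and constitute a route genuinely different from the paper's. The paper instead approximates $\varphi_\infty$ by \emph{step functions} $\sum_\ell c_\ell \1_{\delta_\ell}$ on arcs, shows that the pieces $T_\gamma(\1_{\delta_\ell})$ are \emph{asymptotically orthogonal} (their pairwise products lie in $\Sch_{1/2\gamma,\infty}^0$), and applies an operator-theoretic additivity lemma of Birman--Solomyak (Theorem~\ref{thm.b2}) to reduce to the radial case $\varphi_\infty\equiv 1$, which is explicit. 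In particular, the paper deduces the banded-matrix asymptotics \emph{from} Theorem~\ref{thm.a1}, whereas your plan runs the implication in the opposite direction.

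The gap is in your final step. The relation $\Tr((A^*A)^k)\sim c^{2k}\bigl(\int|\varphi_\infty|^{2k}\,d\theta/2\pi\bigr)\sum_n(n+1)^{-2k\gamma}$ has no asymptotic parameter: for each integer $k$ both sides are fixed numbers, so at best you obtain a sequence of approximate identities, and no Tauberian theorem extracts the pointwise law $s_n\sim Cn^{-\gamma}$ from integer moments alone. If instead you track Schatten errors, your commutator calculus (which is correct) gives $A^*A - c^2\Lambda^{-\gamma}\mathcal{T}(|\varphi_\infty|^2)\Lambda^{-\gamma}\in\Sch_{1/2\gamma,\infty}^0$, and Ky Fan transfers the question to $\Lambda^{-\gamma}\mathcal{T}(|\varphi_\infty|^2)\Lambda^{-\gamma}$ --- but this is structurally the \emph{same} problem (exponent doubled, symbol replaced by $|\varphi_\infty|^2$), so the iteration is circular and never reaches a case you can compute. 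To close it you need an independent input: either genuine Weyl asymptotics for negative-order pseudodifferential operators on $\bbT$ (nontrivial machinery you have not invoked), or a localisation/bracketing argument in the $\theta$ variable --- which is precisely what the paper's asymptotic-orthogonality method provides.
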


As will be clear from the proof, the requirement of the continuity of $\varphi_\infty$ can be considerably
relaxed. For example, any Riemann integrable $\varphi_\infty$ is admissible.

If $\f$ is real-valued, then it is evident that $T(\f)$ is self-adjoint.  
In this case, let us denote by $\{\lambda_n^+(T(\f))\}_{n=0}^\infty$ the 
sequence of positive eigenvalues of $T(\f)$. One can prove an analogous formula 
\[
\lambda_n^{+}(T(\f))
=
C_\gamma^+(\f)n^{-\gamma}+o(n^{-\gamma}), \quad n\to\infty,
\label{a3d}
\]
where 
$$
C_\gamma^+(\f)=2^{-\gamma}\Gamma(\gamma+1)
\biggl(\int_0^{2\pi}\f_\infty^+(e^{i\theta})^{1/\gamma}\frac{d\theta}{2\pi}\biggr)^\gamma
$$
and  $a^+=\max\{a,0\}$. 
(Of course, a similar result holds true for the negative eigenvalues.)
We omit the details of this argument, but will give some comments below 
--- see Remark~\ref{rmk.b4}.

By using conformal mapping, it is possible to extend this result to other 
domains. However, we prefer to focus on the simplest case, as our aim is  to 
emphasize the method rather than the result.

 A result similar to Theorem~\ref{thm.a1} is known in the context of Toeplitz operators on the Fock space. 
In this case, the symbol $\varphi(z)$  depends on $z\in\bbC$, and the 
rate of convergence of the singular values of the corresponding Toeplitz
operators to zero depends on the rate of convergence of $\varphi(z)\to0$ as $\abs{z}\to\infty$. 
In \cite{Raikov}, symbols that behave as $\abs{z}^{-\gamma}\varphi_\infty(e^{i\theta})$ when $\abs{z}\to\infty$
are considered and the spectral asymptotics of the type \eqref{a3d} is proved. 
The proof is achieved through a reduction to a pseudodifferential operator in $L^2(\bbR)$.

Another closely related recent result is \cite{BR}, where the authors consider 
harmonic Toeplitz operators in a bounded domain in $\bbR^d$ with symbols that have a power decay 
near the boundary. They obtain asymptotics of eigenvalues very similar to \eqref{a3d}. 
The method of proof of \cite{BR} is quite different from ours and consists in a reduction to 
a pseudodifferential operator on the boundary. The same method can probably 
be applied to give an alternative proof of Theorem~\ref{thm.a1}.

\subsection{Application: spectral asymptotics for banded matrices}

Let $A$ be an operator on $\ell^2$ corresponding to the 
infinite matrix $\{a_{j,k}\}_{j,k=0}^\infty$ of the following form. 
Our first assumption is that the matrix $A$ is \emph{banded}, i.e. 
for some $M\in\bbN$, we have
$$
a_{j,j+m}=0 \quad \text{ if $\abs{m}>M$.}
$$
Our second assumption is that for each $m$ with $\abs{m}\leq M$, the 
sequence of entries $a_{j,j+m}$ has a power asymptotics as $j\to\infty$. 
More precisely, we fix an exponent $\gamma>0$ and 
complex numbers $b_m$, $m=-M,\dots,M$, and assume that
$$
a_{j,j+m}=b_m j^{-\gamma}+o(j^{-\gamma}), \quad j\to\infty, \quad \abs{m}\leq M.
$$
It is easy to see that under these assumptions the matrix $A$ is compact. 
It is also not difficult to see that $s_n(A)=O(n^{-\gamma})$. 
The theorem below gives the asymptotic behaviour of the singular values. 
\begin{theorem}\label{thm.a3}
Under the above assumptions, the singular values of $A$ satisfy
$$
s_n(A)=
\biggl(\int_{-\pi}^\pi \abs{b(e^{i\theta})}^{1/\gamma}\frac{d\theta}{2\pi}\biggr)^{\gamma}n^{-\gamma}+o(n^{-\gamma}), 
\quad
n\to\infty,
$$
where 
\[
b(e^{i\theta})=\sum_{k=-M}^{M}b_k e^{ik\theta}, \quad 0<\theta\leq 2\pi.
\label{e4}
\]
\end{theorem}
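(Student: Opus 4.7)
The plan is to reduce Theorem~\ref{thm.a3} to Theorem~\ref{thm.a1} by identifying $\ell^2$ with $B^2(\bbD)$ through the unitary $U:\ell^2\to B^2(\bbD)$, $U\delta_n=\sqrt{n+1}\,z^n$, and then realising $A$ as a compact perturbation of the push-forward of a suitably chosen Toeplitz operator $T(\f)$ whose singular value asymptotics are given by Theorem~\ref{thm.a1}. The symbol will take the separable form $\f(re^{i\theta})=(1-r)^\gamma\f_\infty(e^{i\theta})$, with $\f_\infty$ a trigonometric polynomial of degree $M$ whose Fourier coefficients are chosen to reproduce the $b_m$.

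Concretely, writing $\f_\infty(e^{i\theta})=\sum_{|m|\le M}c_m\,e^{im\theta}$, I would compute the matrix elements of $T(\f)$ in the orthonormal basis $e_n=\sqrt{n+1}\,z^n$ by separating the radial and angular integrals. This gives
$$
(T(\f)e_{j+m},e_j)_{B^2}=2\sqrt{(j+1)(j+m+1)}\,c_{-m}\int_0^1(1-r)^\gamma r^{2j+m+1}\,dr
$$
for $|m|\le M$, and zero otherwise. Recognising the radial integral as the Beta function $B(\gamma+1,2j+m+2)$ and using $B(\gamma+1,N)\sim\Gamma(\gamma+1)N^{-\gamma-1}$ yields
$$
(T(\f)e_{j+m},e_j)_{B^2}=2^{-\gamma}\Gamma(\gamma+1)\,c_{-m}\,j^{-\gamma}+O(j^{-\gamma-1}).
$$
Choosing $c_{-m}=2^\gamma b_m/\Gamma(\gamma+1)$ for $|m|\le M$, the matrix of $B:=U^*T(\f)U$ has the same bandwidth $M$ as $A$ and matches the leading asymptotics of $A$ on each diagonal, so the difference $K:=A-B$ is banded of width $M$ with entries $k_{j,j+m}=o(j^{-\gamma})$ for every $|m|\le M$.

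It then remains to show $s_n(K)=o(n^{-\gamma})$ and to combine the pieces. For $P_n$ the orthogonal projection onto $\Span\{e_0,\dots,e_{n-1}\}$, I have $s_n(K)\le\|K(I-P_n)\|$, and writing $K$ as a sum of $2M+1$ weighted shifts supported on its diagonals gives $\|K(I-P_n)\|\le(2M+1)\sup_{j\ge n-M,\,|m|\le M}|k_{j,j+m}|=o(n^{-\gamma})$. Meanwhile $\f$ trivially satisfies the hypotheses of Theorem~\ref{thm.a1} (the convergence $(1-r)^{-\gamma}\f(re^{i\theta})\to\f_\infty(e^{i\theta})$ is exact), so $s_n(B)\sim C_\gamma(\f)\,n^{-\gamma}$; a direct computation using the change of variable $\theta\mapsto-\theta$ shows that the choice of $\f_\infty$ above makes $C_\gamma(\f)=\bigl(\int_0^{2\pi}|b(e^{i\theta})|^{1/\gamma}\,d\theta/(2\pi)\bigr)^\gamma$, exactly the constant appearing in Theorem~\ref{thm.a3}. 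Finally, the Ky~Fan inequalities $s_{n+k}(A)\le s_n(B)+s_k(K)$ and $s_{n+k}(B)\le s_n(A)+s_k(K)$, applied with $k=\lfloor\eps n\rfloor$ and $\eps\to0$, transfer the leading asymptotics from $B$ to $A$. The only step that really demands care is the banded perturbation bound on $s_n(K)$ and the book-keeping needed to match the constants through the Fourier sign conventions; the rest is a direct application of Theorem~\ref{thm.a1}.
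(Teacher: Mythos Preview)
Your proposal is correct and follows essentially the same route as the paper: realise $A$, modulo a banded remainder with $o(j^{-\gamma})$ entries, as the matrix of a Toeplitz operator $T(\f)$ with separable symbol $(1-r)^\gamma\f_\infty(e^{i\theta})$, apply Theorem~\ref{thm.a1}, and use Ky~Fan to absorb the remainder. The only cosmetic differences are that the paper takes $\f_\infty=b$ itself and rescales the resulting matrix by $2^\gamma/\Gamma(\gamma+1)$ afterward (so no $\theta\mapsto-\theta$ bookkeeping is needed), whereas you absorb that constant into the Fourier coefficients $c_{-m}$ upfront; and the paper invokes the packaged Lemma~\ref{lma.b1} for the remainder, while you unfold it into the explicit bound $s_n(K)\le\|K(I-P_n)\|$ together with $s_{n+k}(A)\le s_n(B)+s_k(K)$.
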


\begin{remark*}
If $A$ is self-adjoint, then $b$ is necessarily real-valued. 
In this case one can prove a similar asymptotic formula for 
positive eigenvalues of $A$,  with $(b^+)^{1/\gamma}$ instead of $\abs{b}^{1/\gamma}$.  
\end{remark*}

\subsection{Key ideas of the proof and the structure of the paper}

The main issue is to prove Theorem~\ref{thm.a1} for symbols of the form 
$$
\varphi(re^{i\theta})=(1-r)^\gamma \varphi_\infty(e^{i\theta}). 
$$
For such symbols, we shall write $T_\gamma(\varphi_\infty)$ instead of $T(\varphi)$. 
By a limiting argument, the problem reduces to the case of functions $\varphi_\infty$
that are constant on each arc 
$$
\delta_\ell=\{e^{i\theta}: \tfrac{2\pi \ell}{L}<\theta<\tfrac{2\pi (\ell+1)}{L}\},
$$
where $L\in\bbN$ is fixed. Let us denote by 
$\1_{\delta_\ell}$ the characteristic function of $\delta_\ell$ and write
\[
\varphi_\infty=\sum_{\ell=1}^{L} c_\ell \1_{\delta_\ell}
\label{a6}
\]
with some coefficients $c_\ell$. Then 
\[
T_\gamma(\varphi_\infty)=\sum_{\ell=1}^{L} c_\ell T_\gamma(\1_{\delta_\ell}). 
\label{a5}
\]
Our main observation is that the terms in the sum \eqref{a5} are 
\emph{asymptotically orthogonal} in the sense that the products
$T_\gamma(\1_{\delta_\ell})T_\gamma(\1_{\delta_m})$, $\ell\not=m$, satisfy certain
Schatten class properties. 
An operator theoretic lemma from \cite{BS2} (Theorem~\ref{thm.b2}) then shows that the leading
term coefficient in spectral asymptotics is \emph{additive} for the sum \eqref{a5}. 
This means that the leading term asymptotics of singular values of $T_\gamma(\varphi_\infty)$ is 
the same as that of the orthogonal sum
$$
\oplus_{\ell=1}^\infty c_\ell T_\gamma(\1_{\delta_\ell}). 
$$
Using this fact, it is not difficult to derive the required spectral asymptotics for piecewise
constant functions $\varphi_\infty$, as in \eqref{a6}, from the same asymptotics for constant 
functions $\varphi_\infty$.

We note that this construction  is almost purely operator theoretic and 
uses very little specific features of the problem, apart from rotational symmetry.
Thus, it can be used in other contexts, for example for 
Toeplitz operators associated with multi-dimensional domains with spherical symmetry. 
This method of proof, based on asymptotic orthogonality, was developed in 
\cite{PuYa,PuYa2} for a different purpose.

Let us explain the key idea of the proof of Theorem~\ref{thm.a3}.
Consider the symbol of the form
$$
\varphi(re^{i\theta})=(1-r)^\gamma e^{im\theta},
$$
where $m\in\bbZ$. 
Then the matrix of $T(\varphi)$ in the standard orthonormal basis $\{\sqrt{k+1}z^k\}_{k=0}^\infty$ 
of the Bergman space is
$$
a_{j,k}=\sqrt{j+1}\sqrt{k+1}(\varphi z^j,z^k)_{L^2(\bbD)}.
$$
It is easy to see that $a_{j,k}\not=0$ only if $k=j+m$ and 
$$
a_{j,j+m}=2^{-\gamma}\Gamma(\gamma+1)j^{-\gamma}+o(j^{-\gamma})
$$
as $j\to\infty$. This corresponds to a matrix $A$ as in Theorem~\ref{thm.a3} with
only one non-zero diagonal. Taking a finite linear combination of such matrices gives the case
of a general banded matrix. Thus, the proof of Theorem~\ref{thm.a3} reduces to 
Theorem~\ref{thm.a1}. 

In Section~\ref{sec.b} we recall some background facts concerning singular value
asymptotics for compact operators and state the result from \cite{BS2} on 
asymptotically orthogonal operators. 
In Section~\ref{sec.c} we prove Theorem~\ref{thm.a1} and in Section~\ref{sec.e} 
we prove Theorem~\ref{thm.a3}.

\subsection{Acknowledgements}
The author is grateful to  P.~Honor\'e, G.~Raikov and M.~Reguera for useful discussions.

\section{Operator theoretic tools}\label{sec.b}

Here we collect some general operator theoretic tools related to the 
singular value estimates and asymptotics for compact operators. 

\subsection{Definitions}

For a compact operator $T$ in a Hilbert space, 
we denote by $\{s_n(T)\}_{n=0}^\infty$ the non-increasing
sequence of singular values of $T$, enumerated with multiplicities taken into account. 
Recall that $s_n(T)$ is defined as the $n$'th eigenvalue of the positive semi-definite 
self-adjoint operator $\abs{T}=\sqrt{T^*T}$. 
It will be convenient to work with the singular value counting function:
$$
n(s;T)=\#\{n: s_n(T)>s\}, \quad s>0.
$$
For $p>0$, the standard Schatten class $\Sch_p$ is defined by the condition
$$
\sum_{n=0}^\infty s_n(T)^p<\infty.
$$
In terms of the counting function $n(s;T)$ this can be equivalently rewritten as 
$$
\int_0^\infty n(s;T)s^{p-1}ds<\infty. 
$$
The weak Schatten class $\Sch_{p,\infty}$  is defined by the condition
$$
n(s;T)=O(s^{-p}), \quad s\to0.
$$
The quantity
$$
\norm{T}_{\Sch_{p,\infty}}=\sup_{s>0} sn(s;T)^{1/p} 
$$
is a quasi-norm on $\Sch_{p,\infty}$, and we will be considering $\Sch_{p,\infty}$ with 
respect to the topology given by this quasi-norm.
The subclass $\Sch_{p,\infty}^0\subset\Sch_{p,\infty}$ is defined by the condition
$$
n(s;T)=o(s^{-p}), \quad s\to0;
$$
it can be characterised as the closure of all operators of finite rank in $\Sch_{p,\infty}$. 

We recall that for a compact operator $T$, the relations
\[
s_n(T)=\frac{C}{n^\gamma}+o(n^{-\gamma}), \quad
n\to\infty, 
\quad \text{ and } \quad 
n(s;T)=\frac{C^{1/\gamma}}{s^{1/\gamma}}+o(s^{-1/\gamma}), \quad s\to0,
\label{a2a}
\]
are equivalent. In order to work with the asymptotic coefficient $C$ in such relations,
it will be convenient to use the following functionals in $\Sch_{p,\infty}$: 
\[
\Delta_p(T):=\limsup_{s\to0} s^p n(s;T), 
\quad
\delta_p(T):=\liminf_{s\to0} s^p n(s;T). 
\label{a2b}
\]
In applications, one usually has $\Delta_p(T)=\delta_p(T)$, but it is technically 
convenient to consider the upper and lower limits separately. 
The functionals $\Delta_p$ and $\delta_p$ are continuous in $\Sch_{p,\infty}$.

We will denote $\Sch_0=\cap_{p>0}\Sch_p$. In other words, $\Sch_0$ consists
of compact operators $T$ such that for all $C>0$, one has
$$
s_n(T)=O(n^{-C}), \quad n\to\infty. 
$$

\subsection{Additive and multiplicative estimates}
Below we recall some estimates for singular values of sums and products of compact operators.

The following fundamental result is known as the Ky Fan lemma; see e.g. \cite{BS}. 

\begin{lemma}\label{lma.b1}
Let $A\in\Sch_{\infty}$ and $B\in\Sch_{p,\infty}^0$ for some $p>0$. 
Then 
$$
\Delta_p(A+B)=\Delta_p(A), \quad
\delta_p(A+B)=\delta_p(A).
$$
\end{lemma}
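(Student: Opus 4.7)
The plan is to reduce everything to the familiar Ky Fan inequality for singular values, namely $s_{n+m}(A+B) \leq s_n(A) + s_m(B)$, and then convert this into an inequality for the counting function that is compatible with the functionals $\Delta_p$ and $\delta_p$ defined in \eqref{a2b}.

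The first step is to derive the counting-function version of the Ky Fan inequality. Fix $\sigma, \tau > 0$ and set $n = n(\sigma; A)$ and $m = n(\tau; B)$. By the definition of the counting function, $s_n(A) \leq \sigma$ and $s_m(B) \leq \tau$, so the Ky Fan inequality gives $s_{n+m}(A+B) \leq \sigma + \tau$. This forces $n(\sigma + \tau; A+B) \leq n + m$, that is,
$$
n(\sigma+\tau; A+B) \leq n(\sigma; A) + n(\tau; B), \quad \sigma, \tau > 0.
$$

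The second step is to exploit this inequality with a splitting parameter. For $\eps \in (0,1)$, I would take $\sigma = (1-\eps)s$ and $\tau = \eps s$, multiply through by $s^p$, and regroup to obtain
$$
s^p n(s; A+B)
\leq
(1-\eps)^{-p} \bigl((1-\eps)s\bigr)^p n\bigl((1-\eps)s; A\bigr)
+ \eps^{-p} (\eps s)^p n(\eps s; B).
$$
Taking $\limsup_{s\to 0^+}$ on both sides and invoking the hypothesis $B \in \Sch_{p,\infty}^0$, which forces $(\eps s)^p n(\eps s; B) \to 0$ as $s \to 0$, I obtain $\Delta_p(A+B) \leq (1-\eps)^{-p} \Delta_p(A)$. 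Letting $\eps \to 0^+$ then gives $\Delta_p(A+B) \leq \Delta_p(A)$. The same argument with $\liminf$ replacing $\limsup$ yields $\delta_p(A+B) \leq \delta_p(A)$.

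To get the reverse inequalities, I write $A = (A+B) + (-B)$, observe that $-B \in \Sch_{p,\infty}^0$, and apply the previous argument with the roles of $A$ and $A+B$ interchanged. This yields $\Delta_p(A) \leq \Delta_p(A+B)$ and similarly for $\delta_p$, completing the proof. The only slightly delicate point in the whole argument is the order of the two limits at the end of the second step --- first $s \to 0^+$ to kill the $B$-contribution via the $\Sch_{p,\infty}^0$ assumption, and only afterwards $\eps \to 0^+$ to recover the sharp constant --- but this is purely a matter of careful bookkeeping, and I do not anticipate any genuine obstacle beyond it.
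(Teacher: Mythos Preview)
Your argument is correct and is essentially the standard derivation of this stability result from the Ky Fan inequality. Note, however, that the paper does not actually supply a proof of this lemma: it is stated as a known fact with a reference to \cite{BS}, so there is no proof in the paper to compare against. Your write-up would serve perfectly well as the omitted proof.
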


In Section~\ref{sec.c} we will also need more advanced information about the quantities 
$\Delta_p$ and $\delta_p$. 
One has the following additive esimates, see e.g. \cite[formulas (11.6.12), (11.6.14), (11.6.15)]{BS}:
\begin{align}
\Delta_p(A_1+A_2)^{1/(p+1)}
&\leq 
\Delta_p(A_1)^{1/(p+1)}
+
\Delta_p(A_2)^{1/(p+1)},
\label{b1}
\\
\abs{\Delta_p(A_1)^{1/(p+1)}-\Delta_p(A_2)^{1/(p+1)}}
&\leq 
(\Delta_p(A_1-A_2))^{1/(p+1)},
\label{b2}
\\
\abs{\delta_p(A_1)^{1/(p+1)}-\delta_p(A_2)^{1/(p+1)}}
&\leq 
(\Delta_p(A_1-A_2))^{1/(p+1)}.
\label{b3}
\end{align}

We will also need some multiplicative estimates. 
One has (see \cite[(11.1.19), (11.1.12)]{BS})
\begin{gather}
n(s_1s_2;A_1A_2)\leq n(s_1;A_1)+n(s_2;A_2),\quad s_1>0, \quad s_2>0,
\label{b6}
\\
n(s;A_1A_2)\leq n(s;\norm{A_1}A_2), \quad s>0.
\label{b7}
\end{gather}
From \eqref{b6} it is not difficult to obtain (see \cite[(11.6.18)]{BS}) the bound
\[
\Delta_{p/2}(A_1A_2)\leq 2\Delta_p(A_1)\Delta_p(A_2).
\label{b4}
\]

\subsection{Asymptotically orthogonal operators}

The theorem below is the key operator theoretic ingredient 
of our construction. It has first appeared (under slightly more restrictive assumptions) 
in \cite[Theorem 3]{BS2}. 
Here we follow the presentation of  \cite{PuYa}. 

\begin{theorem}\label{thm.b2}\cite{BS2,PuYa}
Let $p>0$.  
Assume that $A_1,\dots,A_L\in\Sch_{\infty}$ and
\begin{equation}
A_\ell^*A_j\in\Sch_{p/2,\infty}^0,
\quad 
A_\ell A_j^*\in\Sch_{p/2,\infty}^0
\quad 
\text{ for all $\ell\not=j$.}
\label{b9}
\end{equation}
Then for   $A= A_1+\dots+A_L$, we have
\begin{align*}
\Delta_p(A)&=\limsup_{s\to0} s^{p}\sum_{\ell=1}^L n(s,A_\ell),
\\
\delta_p(A)&=\liminf_{s\to0} s^{p}\sum_{\ell=1}^L n(s,A_\ell).
\end{align*}
\end{theorem}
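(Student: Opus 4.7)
The plan is to compare $A$ with two auxiliary operators acting on the enlarged Hilbert space $\calH^L$. Define the ``row'' operator $\mathcal{A}\colon\calH^L\to\calH$ by $\mathcal{A}(f_1,\dots,f_L)=\sum_{\ell=1}^L A_\ell f_\ell$ and the block-diagonal operator $\wt{A}=\diag(A_1,\dots,A_L)$ on $\calH^L$. The singular values of $\wt{A}$ are the rearrangement of the combined family $\{s_n(A_\ell)\}_{n,\ell}$, so $n(s;\wt{A})=\sum_{\ell=1}^L n(s;A_\ell)$ for all $s>0$. The conclusion of the theorem will follow as soon as I establish $\Delta_p(A)=\Delta_p(\mathcal{A})=\Delta_p(\wt{A})$, together with the analogous identities for $\delta_p$.

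The key algebraic identities are
$$
AA^*-\mathcal{A}\mathcal{A}^*=\sum_{\ell\neq j}A_\ell A_j^*,
$$
together with the fact that $\mathcal{A}^*\mathcal{A}-\wt{A}^*\wt{A}$ is the $L\times L$ block operator on $\calH^L$ with vanishing diagonal blocks and with $(\ell,j)$ off-diagonal block equal to $A_\ell^*A_j$ for $\ell\neq j$. In view of the hypothesis \eqref{b9} and the fact that $\Sch_{p/2,\infty}^0$ is stable under finite sums and under the embedding of blocks into a block matrix (these follow from the quasi-triangle inequality in the weak Schatten class and the ideal property), both differences belong to $\Sch_{p/2,\infty}^0$.

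To convert this into singular value information, I would use the Ky Fan inequality $n(s+t;T_1+T_2)\le n(s;T_1)+n(t;T_2)$ for singular values of compact operators, combined with the identity $n(\tau;X)=n(\tau^2;XX^*)$ valid for any compact $X$. Writing $AA^*=\mathcal{A}\mathcal{A}^*+D_1$ with $D_1\in\Sch_{p/2,\infty}^0$ and splitting $\tau^2=(1-\eps)\tau^2+\eps\tau^2$, one obtains
$$
n(\tau;A)\le n(\sqrt{1-\eps}\,\tau;\mathcal{A})+n(\eps\tau^2;D_1),
$$
and $n(\eps\tau^2;D_1)=o(\tau^{-p})$ as $\tau\to 0$ for each fixed $\eps>0$. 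The symmetric inequality with the roles of $A$ and $\mathcal{A}$ swapped follows from writing $\mathcal{A}\mathcal{A}^*=AA^*-D_1$. Multiplying by $\tau^p$, passing to $\limsup$ (respectively $\liminf$), and then letting $\eps\to 0$ to absorb the factor $(1-\eps)^{-p/2}$ produced by the change of variable $\sigma=\sqrt{1-\eps}\,\tau$, one deduces $\Delta_p(A)=\Delta_p(\mathcal{A})$ and $\delta_p(A)=\delta_p(\mathcal{A})$. Running the same argument with $\mathcal{A}^*\mathcal{A}=\wt{A}^*\wt{A}+D_2$ and the identity $n(\tau;X)=n(\tau^2;X^*X)$ then yields $\Delta_p(\mathcal{A})=\Delta_p(\wt{A})$ and $\delta_p(\mathcal{A})=\delta_p(\wt{A})$, which completes the proof.

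The main point requiring care is the bookkeeping around the factor $(1-\eps)^{-p/2}$: one must verify that the $o(\tau^{-p})$ remainder really is uniform enough that, after taking $\tau\to 0$ and then $\eps\to 0$, the bound collapses to a genuine equality of asymptotic coefficients, and that the argument passes identically through the $\liminf$ as through the $\limsup$. A secondary subtlety is to check cleanly that a block matrix on $\calH^L$ with each block in $\Sch_{p/2,\infty}^0$ itself lies in $\Sch_{p/2,\infty}^0$, but this reduces to the ideal property of the class together with the stability under finite sums.
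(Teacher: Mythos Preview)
Your proposal is correct and follows essentially the same route as the paper: your row operator $\mathcal{A}$ is exactly the paper's $JA_0$, your block-diagonal $\wt{A}$ is the paper's $A_0$, and the two-step comparison $A\leftrightarrow\mathcal{A}\leftrightarrow\wt{A}$ via the squares $AA^*$, $\mathcal{A}\mathcal{A}^*$, $\mathcal{A}^*\mathcal{A}$, $\wt{A}^*\wt{A}$ is identical. The only cosmetic difference is that the paper packages the perturbation step as an application of Lemma~\ref{lma.b1}, whereas you reprove that lemma inline with the $\eps$-splitting of $\tau^2$.
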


\begin{proof}
Put
$$
\calH^L=\underbrace{\calH\oplus\dots\oplus\calH}_{\text{$L$ terms}}
$$
and let
$A_0=\diag\{A_1,\dots,A_L\}$ in $\calH^L$, i.e., 
$$
A_0(f_1,\dots,f_L)=(A_1f_1,\dots,A_Lf_L).
$$
Since
$$
A_0^*A_0=\diag\{A_1^*A_1,\dots,A_L^*A_L\},
$$
we see that
$$
n(\eps;A_0)
=
\sum_{\ell =1}^L
n(\eps;A_\ell).
$$
Thus, we need to prove the relations
$$
\Delta_p(A)=\Delta_p(A_0), \quad 
\delta_p(A)=\delta_p(A_0).
$$
We will focus on the functionals $\Delta_p$; the functionals $\delta_p$
are considered in the same way.

Next, let 
$J:\calH^L\to\calH$ be the operator given by 
$$
J(f_1,\dots,f_L)=f_1+\dots+f_L
\quad \mbox{so that} \quad 
J^* f=(f,\dots,f).
$$
Then
$$
J A_0(f_1,\dots,f_L)=A_1f_1 + \dots +A_Lf_L
$$
and
$$
(J A_0)^* f=(A_1^*f,\dots,A_L^*f).
$$
It follows that
\begin{equation}
(J A_{0}) (J A_0)^* f= (A_1 A_1^* +\dots +A_L A_L^*) f
\label{b13}
\end{equation}
and the operator $(J A_0)^*(J A_0)$ is a ``matrix'' in $\calH^L$ given by
$$
(J A_0)^*(J A_0)
=
\begin{pmatrix}
A_1^*A_1 & A_1^*A_2 & \dots & A_1^* A_L
\\
A_2^*A_1 & A_2^*A_2 & \dots & A_2^* A_L
\\
\vdots & \vdots & \ddots & \vdots
\\
A_L^*A_1 & A_L^*A_2 & \dots  & A_L^* A_L
\end{pmatrix}.
$$
By our assumption \eqref{b9},   we have
\[
(J A_0)^*(J A_0)-A_0^*A_0\in\Sch_{p/2,\infty}^0.
\label{b15}
\]
Indeed,   the ``matrix'' of the operator in \eqref{b15}  has zeros on the 
diagonal, and its
off-diagonal elements are given by $A_\ell^* A_j$, $\ell\not=j$. 
Now Lemma~\ref{lma.b1} implies that
$$
\Delta_{p/2}((J A_0)^*(J A_0))
=
\Delta_{p/2}(A_0^*A_0)
$$
or
\[
\Delta_{p/2}((J A_0)(J A_0)^*)
=
\Delta_{p/2}(A_0^*A_0)
\label{b16}
\]
because for any compact operator $T$ the non-zero singular values of $T^*T$ and $TT^*$ coincide.

Further, since
$
AA^*=
\sum_{\ell,j=1}^L A_\ell A_j^*,
$
it follows from \eqref{b13} and the second assumption  \eqref{b9} that
$$
AA^*-(J A_0)(J A_0)^* = \sum_{j\not=\ell}A_\ell A_j^* \in\Sch_{p/2,\infty}^0.
$$
Using Lemma~\ref{lma.b1} again, from here we obtain
$$
\Delta_{p}(A)=\Delta_{p/2}(AA^*)=\Delta_{p/2}((J A_0)(J A_0)^*).
$$
Combining the last equality with    \eqref{b16}, we see that
$$
\Delta_p(A)=\Delta_{p/2}(A_0^*A_0)=\Delta_p(A_0).
$$
The same reasoning also proves $\delta_p(A)=\delta_p(A_0)$. 
\end{proof}

\begin{corollary}\label{cr.b3}
Under the hypothesis of the theorem above, assume in addition that 
$$
n(s;A_1)=n(s;A_2)=\dots=n(s;A_L), \quad s>0.
$$
Then 
$$
\Delta_p(A)=L \Delta_p(A_1),
\quad
\delta_p(A)=L \delta_p(A_1).
$$
\end{corollary}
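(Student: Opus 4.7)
The plan is to obtain this corollary as a direct consequence of Theorem~\ref{thm.b2}. Since the hypotheses of that theorem are assumed to hold, I can immediately write
\[
\Delta_p(A) = \limsup_{s\to 0} s^p \sum_{\ell=1}^L n(s;A_\ell),
\]
and similarly for $\delta_p(A)$ with $\liminf$ in place of $\limsup$.

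Next, I would use the additional hypothesis $n(s;A_1) = n(s;A_2) = \cdots = n(s;A_L)$ to collapse the sum: the inner sum equals $L\, n(s;A_1)$ for every $s>0$, so
\[
\Delta_p(A) = \limsup_{s\to 0} s^p \cdot L \, n(s;A_1) = L \limsup_{s\to 0} s^p n(s;A_1) = L\,\Delta_p(A_1),
\]
where in the last step I simply invoke the definition \eqref{a2b} of the functional $\Delta_p$. The identical argument with $\liminf$ in place of $\limsup$ yields $\delta_p(A) = L\,\delta_p(A_1)$.

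There is essentially no obstacle here: the only non-trivial content was already absorbed into Theorem~\ref{thm.b2}, and the corollary is a one-line consequence of pulling the constant factor $L$ out of $\limsup$ and $\liminf$. The slight subtlety worth mentioning is that the equality $n(s;A_\ell) = n(s;A_1)$ is required to hold for all $s>0$ (not merely in the limit), so that the factor $L$ can be extracted before taking $\limsup$ or $\liminf$; this is exactly what is assumed.
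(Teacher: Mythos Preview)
Your proof is correct and is precisely the intended one-line derivation from Theorem~\ref{thm.b2}; the paper does not even spell out a proof for this corollary, treating it as immediate.
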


\begin{remark}\label{rmk.b4}
In order to prove the asymptotics \eqref{a3d} for positive eigenvalues of self-adjoint operators, one needs
an analogue of Theorem~\ref{thm.b2} for the counting function of positive eigenvalues of self-adjoint operators. 
Such theorem can be found in \cite[Theorem~2.3]{PuYa2}. 
\end{remark}

\section{Proof of Theorem~\ref{thm.a1}}\label{sec.c}

\subsection{Preliminary remarks}

By the equivalence \eqref{a2a}, the statement of 
Theorem~\ref{thm.a1} can be equivalently rewritten in terms of the singular value counting function as
$$
\lim_{s\to0}s^{1/\gamma}n(s;T(\varphi))
=
\frac12\Gamma(\gamma+1)^{1/\gamma}
\int_0^{2\pi}\abs{\varphi_\infty(e^{i\theta})}^{1/\gamma}\frac{d\theta}{2\pi}.
$$
Throughout the proof, we use the shorthand notation for the coefficient appearing 
in the right side here:
$$
\varkappa_\gamma:=\frac12\Gamma(\gamma+1)^{1/\gamma}.
$$
Using this notation and the functionals $\Delta_p$, $\delta_p$ defined in \eqref{a2b}, 
one can rewrite the statement of 
Theorem~\ref{thm.a1} as
$$
\Delta_{1/\gamma}(T(\f))=\delta_{1/\gamma}(T(\f))=\varkappa_\gamma \int_0^{2\pi}\abs{\varphi_\infty(e^{i\theta})}^{1/\gamma}\frac{d\theta}{2\pi}.
$$
As in Section~\ref{sec.a}, for a symbol  $\f$ of the form
$$
\f(re^{i\theta})=(1-r)^\gamma g(e^{i\theta}), \quad g\in L^\infty(\bbT),
$$ 
we will write $T_\gamma(g)$ 
instead of $T(\varphi)$.
The case of the radially symmetric $\f$ corresponds to the choice $g=1$.
In this case, the asymptotics of singular values is given by \eqref{a1b}.
In terms of the asymptotic functionals $\Delta_p$, $\delta_p$ this can be rewritten as  
$$
\Delta_{1/\gamma}(T_\gamma(1))=\delta_{1/\gamma}(T_\gamma(1))
=
\varkappa_\gamma.
$$
Finally, we need some notation: for a symbol $\f$ we denote by $M(\varphi)$ the operator of
multiplication by $\varphi(z)$ in $L^2(\bbD)$. Then the Toeplitz operator $T(\f)$ can be written as
\[
T(\f)=PM(\f)P^* \quad \text{ in $B^2(\bbD)$,}
\label{c4a}
\]
where the orthogonal projection $P$ is understood to act from $L^2(\bbD)$ to $B^2(\bbD)$, 
and $P^*$ acts from $B^2(\bbD)$ to $L^2(\bbD)$.

\subsection{Asymptotic orthogonality}

The main analytic ingredient of our construction is the following lemma. 

\begin{lemma}\label{lma.c2}
Let $g_1,g_2\in L^\infty(\bbT)$ be such that the distance between the supports 
of $g_1$ and $g_2$ on $\bbT$ is positive. 
Then 
$$
T_\gamma(g_1)T_\gamma(g_2)^*\in\Sch_0. 
$$
\end{lemma}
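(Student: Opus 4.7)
My plan rests on the factorisation $T(\varphi)=PM(\varphi)P^*$ from \eqref{c4a}, which (combined with $T_\gamma(g_2)^*=T_\gamma(\overline{g_2})$) yields
\[
T_\gamma(g_1)T_\gamma(g_2)^* = PM(\varphi_1)\Pi M(\overline{\varphi_2})P^*,
\]
where $\Pi:=P^*P$ is the Bergman projection on $L^2(\bbD)$ and $\varphi_j(re^{i\theta})=(1-r)^\gamma g_j(e^{i\theta})$. Since $\Sch_0$ is a two-sided ideal and $P$, $P^*$ are bounded, the task reduces to showing that $M(\varphi_1)\Pi M(\overline{\varphi_2})\in\Sch_0$ on $L^2(\bbD)$.

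The driving observation is that the Bergman reproducing kernel $(1-z\bar w)^{-2}$ is real-analytic on $\overline{\bbD}\times\overline{\bbD}$ away from the boundary diagonal $\{(e^{i\alpha},e^{i\alpha}):\alpha\in\bbR\}$, and the angular separation of $\supp g_1$ and $\supp g_2$ prevents the integrand from reaching that set. To exploit this without any smoothness assumption on $g_1,g_2$, I would pull out the rough factors using smooth cut-offs $\chi_1,\chi_2\in C^\infty(\overline{\bbD})$ with $\chi_j\equiv 1$ on $\supp\varphi_j$, chosen so that close to $\partial\bbD$ the $\chi_j$ become purely angular with disjoint supports on $\bbT$. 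A concrete recipe is $\chi_j(z)=a(\abs{z})+(1-a(\abs{z}))b_j(z/\abs{z})$, with $a\in C^\infty[0,1]$ a plateau equal to $1$ near $0$ and $0$ near $1$, and $b_j\in C^\infty(\bbT)$ with disjoint supports and $b_j\equiv 1$ on $\supp g_j$. The pointwise identities $\chi_j\varphi_j=\varphi_j$ and $\chi_j\overline{\varphi_j}=\overline{\varphi_j}$ give the factorisation
\[
M(\varphi_1)\Pi M(\overline{\varphi_2})=M(\varphi_1)[M(\chi_1)\Pi M(\chi_2)]M(\overline{\varphi_2}).
\]

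The middle operator has integral kernel $K(z,w)=\chi_1(z)\chi_2(w)(1-z\bar w)^{-2}$, and I claim $K\in C^\infty(\overline{\bbD}\times\overline{\bbD})$. Away from the diagonal this is immediate; at a point $(e^{i\alpha},e^{i\alpha})$ the disjointness of $\supp b_1$ and $\supp b_2$ ensures that one of the $b_j$ vanishes in an open angular neighbourhood of $e^{i\alpha}$, so the corresponding $\chi_j$ vanishes in a whole Euclidean neighbourhood of $e^{i\alpha}$ (once $\abs{\cdot}$ is large enough that $a$ has switched off), making $K\equiv 0$ near the singular point. Once $K\in C^\infty$ is established, the standard singular-value bound for integral operators with smooth kernels on a bounded domain (via approximation by tensor products of polynomials of growing degree, with error controlled by smoothness) places $M(\chi_1)\Pi M(\chi_2)$ in every Schatten class $\Sch_p$, $p>0$, hence in $\Sch_0$. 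Sandwiching by the bounded multiplications $M(\varphi_1)$ and $M(\overline{\varphi_2})$ preserves $\Sch_0$, finishing the argument.

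The main technical obstacle is the construction of the cut-offs: they must be $C^\infty$ across the origin of $\bbD$, where $\arg z$ is ill-defined, yet equal $1$ on the sectorial sets $\supp\varphi_j$ which reach all the way down to $z=0$. The radial-plateau construction above deals with this by making $\chi_j$ genuinely constant near $0$ and only switching on the angular selection once $\abs{z}$ is bounded away from the origin, where in any case the denominator is controlled by $\abs{1-z\bar w}\geq 1-\abs{z}\abs{w}>0$.
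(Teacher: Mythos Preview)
Your proposal is correct and follows essentially the same route as the paper: factor through the Bergman projection, insert smooth cut-offs, and observe that the resulting integral kernel $\chi_1(z)\chi_2(w)(1-z\bar w)^{-2}$ is $C^\infty$ on $\overline{\bbD}\times\overline{\bbD}$ because the only singularity of the Bergman kernel lies on the boundary diagonal, where the angular separation forces one of the cut-offs to vanish. The single cosmetic difference is that the paper first truncates the symbols to the annulus $\{\abs{z}\geq 1/2\}$ so that its cut-offs $\omega_j$ can be chosen with globally disjoint supports, whereas you let your $\chi_j$ overlap near the origin and rely on the harmlessness of the kernel there.
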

\begin{proof}
For $j=1,2$, denote
\begin{align*}
\f_j(re^{i\theta})&=(1-r)^\gamma g_j(e^{i\theta}),
\\
\psi_j(re^{i\theta})&=(1-r)^\gamma \1_{[1/2,1]}(r)g_j(e^{i\theta}),
\end{align*}
where $ \1_{[1/2,1]}$ is the characteristic function of the interval $[1/2,1]$. 
Since $\f_j-\psi_j$ is bounded and supported in the disc $\abs{z}\leq1/2$, 
it is easy to conclude that 
$$
T(\f_j-\psi_j)\in\Sch_0, \quad j=1,2.
$$
Thus, it suffices to prove the inclusion
$$
T(\psi_1)T(\psi_2)^*\in\Sch_0. 
$$
We have 
$$
T(\psi_1)T(\psi_2)^*=PM(\psi_1)P^*PM(\overline{\psi_2})P^*,
$$
and so it suffices to prove the inclusion 
$$
M(\psi_1) P^*P M(\overline{\psi_2})\in\Sch_0.
$$ 
Further, 
let $\omega_1,\omega_2\in C^\infty(\overline{\bbD})$ be 
such that the distance between the supports 
$\supp\omega_1$, $\supp\omega_2$ is  positive  and 
$$
\omega_1\psi_1=\psi_1, \quad \omega_2\psi_2=\psi_2.
$$
Such functions exist by our assumption on the supports of $g_1$, $g_2$. 
We have
$$
M(\psi_1) P^*P M(\overline{\psi_2})
=
M(\psi_1)M(\omega_1)P^*PM(\overline{\omega_2})M(\overline{\psi_2}). 
$$
So it suffices to prove that 
$M(\omega_1) P^*P M(\overline{\omega_2})\in\Sch_0$.
Clearly, $P^*P$ is the orthogonal projection in $L^2(\bbD)$ whose integral kernel 
is the Bergman kernel. Using the explicit formula for the Bergman kernel, we see
that $M(\omega_1) P^*P M(\overline{\omega_2})$ 
is the integral operator in $L^2(\bbD)$ with the kernel
$$
\frac{ \omega_1(z)\overline{\omega_2(\zeta)}}{(1-z\overline{\zeta})^{2}},
\quad z,\zeta\in\bbD.
$$
Since $\omega_1$ and $\omega_2$ have disjoint supports, 
we see that this kernel
is $C^\infty$-smooth. 
It is a well known fact that integral operators with $C^\infty$ kernels on compact domains belong to $\Sch_0$ 
(it can be proven, for example, by approximating the integral kernel by
polynomials). 
Thus, the operator $M(\omega_1) P^*P M(\overline{\omega_2})$ 
is in $\Sch_0$.
\end{proof}

We would like to have an analogous statement 
where $g_1$ and $g_2$ are characteristic functions of disjoint (but possibly ``touching")
open intervals. We will obtain it from Lemma~\ref{lma.c2} by an approximation argument. 
To this end, in the next subsection we develop some rather crude estimates.

\subsection{Auxiliary estimates}

\begin{lemma}\label{lma.c1a}
If $\abs{g}\leq g_0$, where $g_0$ is a constant, then 
$$
\Delta_{1/\gamma}(T_\gamma(g))
\leq 
2\varkappa_\gamma \abs{g_0}^{1/\gamma}. 
$$
\end{lemma}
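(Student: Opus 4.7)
The plan is to reduce the estimate to the known radial case \eqref{a1b} via the factorisation
\begin{equation*}
T_\gamma(g) = A\, M(g)\, A^*,\qquad A := PM(\psi),\quad \psi(re^{i\theta}) := (1-r)^{\gamma/2},
\end{equation*}
viewed as an operator $L^2(\bbD) \to B^2(\bbD)$. The square-root weight $\psi$ is chosen so that
\begin{equation*}
AA^* = PM(\psi^2)P^* = PM((1-r)^\gamma)P^* = T_\gamma(1),
\end{equation*}
which is radial. Hence $s_n(A)^2 = s_n(T_\gamma(1)) = \varkappa_\gamma^\gamma n^{-\gamma}(1+o(1))$ by \eqref{a1b}, or equivalently
\begin{equation*}
n(s;A) = \varkappa_\gamma\, s^{-2/\gamma} + o(s^{-2/\gamma}), \quad s \to 0.
\end{equation*}

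Since $\|M(g)\| \leq g_0$, the standard inequality $s_n(XY) \leq \|X\|\, s_n(Y)$ yields $n(s;M(g)A^*) \leq n(s/g_0;A)$, and hence, for $s_2 \to 0$,
\begin{equation*}
n(s_2; M(g)A^*) \leq (\varkappa_\gamma + o(1))\, g_0^{2/\gamma}\, s_2^{-2/\gamma}.
\end{equation*}
I now apply the multiplicative counting-function estimate \eqref{b6} to $T_\gamma(g) = A\cdot (M(g)A^*)$: for any $s_1, s_2 > 0$ with $s_1 s_2 = s$,
\begin{equation*}
n(s; T_\gamma(g)) \leq n(s_1; A) + n(s_2; M(g)A^*) \leq (\varkappa_\gamma + o(1))\bigl[\,s_1^{-2/\gamma} + g_0^{2/\gamma}\, s_2^{-2/\gamma}\,\bigr].
\end{equation*}
By AM--GM, the bracket under the constraint $s_1 s_2 = s$ is minimised at $s_1 = \sqrt{s/g_0}$, $s_2 = \sqrt{s g_0}$, where both terms equal $g_0^{1/\gamma} s^{-1/\gamma}$. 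Multiplying through by $s^{1/\gamma}$ and taking $\limsup_{s\to 0}$ yields $\Delta_{1/\gamma}(T_\gamma(g)) \leq 2\varkappa_\gamma g_0^{1/\gamma}$, as required.

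The conceptual content is entirely in the factorisation; after that, the argument is operator-theoretic bookkeeping. The one point that actually requires attention is the optimisation in \eqref{b6}: the naive symmetric split $s_1 = s_2 = \sqrt{s}$ produces the weaker constant $\varkappa_\gamma(1 + g_0^{2/\gamma})$, and applying the cruder product estimate \eqref{b4} directly to $A \cdot M(g)A^*$ gives only $2\varkappa_\gamma^2 g_0^{2/\gamma}$. Only the AM--GM-balanced version of \eqref{b6} captures the precise constant $2\varkappa_\gamma g_0^{1/\gamma}$ stated in the lemma.
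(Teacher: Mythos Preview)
Your proof is correct and follows essentially the same approach as the paper: factor $T_\gamma(g)$ through the square-root weight $(1-r)^{\gamma/2}$ and apply \eqref{b6}, \eqref{b7}. The only cosmetic difference is that the paper absorbs $|g_0|^{1/2}$ into the factorisation, setting $G = PM(|g_0|^{1/2}(1-r)^{\gamma/2})$ so that $T_\gamma(g) = G M(\varphi_1) G^*$ with $\|M(\varphi_1)\|\leq 1$; then the symmetric split $s_1 = s_2 = \sqrt{s}$ already gives $2n(\sqrt{s};G)=2n(s;|g_0|T_\gamma(1))$, and your AM--GM optimisation is not needed.
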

\begin{proof}
Let us write our symbol $\f$ as 
$$
\f=\f_0^{1/2}\f_1\f_0^{1/2}, \quad\text{ where }\quad \f_0(z)=\abs{g_0}(1-\abs{z})^{\gamma}
\quad\text{and}
\quad
\abs{\f_1(z)}\leq 1.
$$
Then by \eqref{c4a} we have
$$
n(s;T_\gamma(g))=n(s;T(\f))=n(s;GM(\f_1)G^*), \quad G=PM(\f_0^{1/2})
$$
and 
$$
n(s;\abs{g_0}T_\gamma(1))=n(s;T(\f_0))=n(s;PM(\f_0) P^*)=n(s;GG^*).
$$
Applying the estimates \eqref{b6}, \eqref{b7}, we obtain
\begin{multline*}
n(s;GM(\f_1)G^*)
\leq
n(\sqrt{s};G)+n(\sqrt{s};M(\f_1)G^*)
\\
\leq
n(\sqrt{s};G)+n(\sqrt{s};G^*)
=
2n(\sqrt{s};G)
=
2n(s;GG^*)=2n(s;T(\f_0)).
\end{multline*}
Multiplying by $s^{1/\gamma}$ and taking $\limsup$ yields 
$$
\Delta_{1/\gamma}(T_\gamma(g))
\leq 
2\Delta_{1/\gamma}(\abs{g_0}T_\gamma(1))
=
2\abs{g_0}^{1/\gamma}
\varkappa_\gamma,
$$
as required.
\end{proof}

\begin{lemma}\label{lma.c3}
Let $\delta\subset\bbT$ be an arc with 
arclength $\abs{\delta}<2\pi$. Then 
$$
\Delta_{1/\gamma}(T_\gamma(\1_\delta))
\leq 
\varkappa_\gamma\abs{\delta}.
$$
\end{lemma}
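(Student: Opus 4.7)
The plan is to exploit rotational symmetry of the Bergman space together with the asymptotic orthogonality tool from Section~\ref{sec.b}. Let $N\geq 1$ be the largest integer with $N\abs{\delta}<2\pi$; the surplus $2\pi-N\abs{\delta}>0$ allows us to fit $N$ rotated copies $\delta_1,\dots,\delta_N$ of $\delta$ inside $\bbT$ that are pairwise separated by arcs of strictly positive length.

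Rotation $f(z)\mapsto f(e^{-i\alpha}z)$ is a unitary of $L^2(\bbD)$ that preserves $B^2(\bbD)$ and commutes with the Bergman projection, and conjugation by it sends $T_\gamma(\1_\delta)$ to $T_\gamma$ of a rotated arc. Consequently each $T_\gamma(\1_{\delta_j})$ is unitarily equivalent to $T_\gamma(\1_\delta)$, so the counting functions $n(s;T_\gamma(\1_{\delta_j}))$ all coincide. Because the $\delta_j$ have pairwise positive distance in $\bbT$, Lemma~\ref{lma.c2} yields $T_\gamma(\1_{\delta_j})T_\gamma(\1_{\delta_\ell})^{*}\in\Sch_0$ (and likewise for the product in the opposite order) whenever $j\neq\ell$, which is much stronger than the $\Sch_{1/(2\gamma),\infty}^0$ hypothesis required by Corollary~\ref{cr.b3} with $p=1/\gamma$. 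That corollary gives
$$
\Delta_{1/\gamma}\Bigl(\sum_{j=1}^N T_\gamma(\1_{\delta_j})\Bigr)=N\,\Delta_{1/\gamma}\bigl(T_\gamma(\1_\delta)\bigr).
$$

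Next, the symbols $\1_{\delta_j}$ are non-negative, so each $T_\gamma(\1_{\delta_j})$ is a non-negative operator, and the pointwise inequality $\sum_j \1_{\delta_j}\leq 1$ upgrades to the operator inequality $\sum_j T_\gamma(\1_{\delta_j})\leq T_\gamma(1)$. By min-max this forces $n(s;\sum_j T_\gamma(\1_{\delta_j}))\leq n(s;T_\gamma(1))$, hence
$$
N\,\Delta_{1/\gamma}\bigl(T_\gamma(\1_\delta)\bigr)\leq \Delta_{1/\gamma}\bigl(T_\gamma(1)\bigr)=\varkappa_\gamma,
$$
so that $\Delta_{1/\gamma}(T_\gamma(\1_\delta))\leq \varkappa_\gamma/N$.

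A brief case split closes the argument. From $N\geq 2\pi/\abs{\delta}-1$ one has $1/N\leq \abs{\delta}/(2\pi-\abs{\delta})\leq\abs{\delta}$ whenever $\abs{\delta}\leq 2\pi-1$. In the residual range $2\pi-1<\abs{\delta}<2\pi$ we have $\abs{\delta}>1$, so the trivial monotonicity bound $\Delta_{1/\gamma}(T_\gamma(\1_\delta))\leq\Delta_{1/\gamma}(T_\gamma(1))=\varkappa_\gamma\leq\varkappa_\gamma\abs{\delta}$ already suffices. The only subtlety to watch is that $N$ must be chosen with the \emph{strict} inequality $N\abs{\delta}<2\pi$, so that genuine positive gaps exist between the $\delta_j$ and Lemma~\ref{lma.c2} is actually applicable; everything else is operator-theoretic bookkeeping.
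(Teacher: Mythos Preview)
Your argument is correct and follows essentially the same rotate-and-sum strategy as the paper: choose the same number of rotated disjoint copies, use Lemma~\ref{lma.c2} for asymptotic orthogonality, and apply Corollary~\ref{cr.b3}. The one difference is that where the paper invokes Lemma~\ref{lma.c1a} to bound $\Delta_{1/\gamma}$ of the sum by $2\varkappa_\gamma$, you instead use positivity and the operator inequality $\sum_j T_\gamma(\1_{\delta_j})\leq T_\gamma(1)$ together with min--max to get the sharper bound $\varkappa_\gamma$ directly; this is a valid shortcut for characteristic-function symbols, though the paper's route via Lemma~\ref{lma.c1a} has the advantage of working for arbitrary bounded $g$ and is reused later in the argument.
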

\begin{proof}
Let $L\in\bbN$ be such that $2\pi/(L+1)\leq \abs{\delta}< 2\pi/L$. 
For $\ell=1,\dots,L$, let $\delta_\ell$ be the arc $\delta$ rotated by the angle $2\pi \ell/L$: 
\begin{equation}
\delta_\ell
=
e^{2\pi i\ell/L}\delta_\ell. 
\label{c1}
\end{equation}
In particular, $\delta_L=\delta$. Then the arcs $\delta_1,\dots\delta_L$ are disjoint and so 
$$
g:=\sum_{\ell=1}^L \1_{\delta_\ell}\leq 1.
$$
By Lemma~\ref{lma.c1a}, it follows that 
$$
\Delta_{1/\gamma}(T_\gamma(g))\leq 2\varkappa_\gamma.
$$
Further, it is easy to see that the operators
$T_\gamma(\1_{\delta_\ell})$ are unitarily equivalent to each other by rotation. 
Thus, 
$$
n(s;T_\gamma(\1_{\delta_\ell}))=n(s;T_\gamma(\1_{\delta})), \quad s>0
$$
for all $\ell$. 
Finally, we have
$$
T_\gamma(\1_{\delta_\ell})T_\gamma(\1_{\delta_j})\in\Sch_0,\quad \ell\not=j,
$$
by Lemma~\ref{lma.c2}. 
Thus, we can apply Corollary~\ref{cr.b3} to $A_\ell=T_\gamma(\1_{\delta_\ell})$ 
and $A=T_\gamma(g)$. 
This yields
$$
\Delta_{1/\gamma}(T_\gamma(\1_\delta))
=
\Delta_{1/\gamma}(T_\gamma(g))/L
\leq
2\varkappa_\gamma/L
\leq
2\pi\varkappa_\gamma/(L+1)
\leq
\varkappa_\gamma\abs{\delta},
$$
as claimed.
\end{proof}

\begin{lemma}\label{lma.c3a}
Let $\delta$ and $\delta'$ be two arcs in $\bbT$ such that 
the symmetric difference $\delta\Delta\delta'$ has total length $<\eps$. 
Then 
$$
\Delta_{1/\gamma}(T_\gamma(\1_\delta))-T_\gamma(\1_{\delta'}))
\leq 
2^{1+1/\gamma}\varkappa_\gamma\eps.
$$
\end{lemma}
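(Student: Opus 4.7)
The plan is to dominate the symbol $\1_\delta-\1_{\delta'}$ pointwise by the indicator $\1_{\delta\triangle\delta'}$ of the symmetric difference, reduce to estimating $\Delta_{1/\gamma}(T_\gamma(\1_{\delta\triangle\delta'}))$ via a mild generalisation of Lemma~\ref{lma.c1a}, and then apply the additive estimate \eqref{b1} together with Lemma~\ref{lma.c3} to an arc decomposition of $\delta\triangle\delta'$.

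First I would note that the proof of Lemma~\ref{lma.c1a} really only uses that the majorant $g_0$ is non-negative, not that it is constant: the factorisation $\f=\f_0^{1/2}\f_1\f_0^{1/2}$ with $\f_0(re^{i\theta})=g_0(e^{i\theta})(1-r)^\gamma$ and $\abs{\f_1}\leq 1$ makes sense as soon as $\abs{g}\leq g_0$, and the Ky Fan manipulation based on \eqref{b6} and \eqref{b7} then yields $n(s;T_\gamma(g))\leq 2\,n(s;T_\gamma(g_0))$ verbatim. Applying this with $g=\1_\delta-\1_{\delta'}$ and $g_0=\1_{\delta\triangle\delta'}$ gives
$$
\Delta_{1/\gamma}(T_\gamma(\1_\delta-\1_{\delta'})) \leq 2\,\Delta_{1/\gamma}(T_\gamma(\1_{\delta\triangle\delta'})).
$$
Next, an elementary case analysis of the relative position of $\delta$ and $\delta'$ on $\bbT$ shows that $\delta\triangle\delta'$ is a union of at most two disjoint arcs $\eta_1,\eta_2$, necessarily satisfying $\abs{\eta_1}+\abs{\eta_2}<\eps$. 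Setting $q:=1/(1+1/\gamma)=\gamma/(1+\gamma)$, the additive estimate \eqref{b1} and Lemma~\ref{lma.c3} then give
$$
\Delta_{1/\gamma}(T_\gamma(\1_{\delta\triangle\delta'}))^q \leq \sum_{j=1}^{2}\Delta_{1/\gamma}(T_\gamma(\1_{\eta_j}))^q \leq \varkappa_\gamma^q\bigl(\abs{\eta_1}^q+\abs{\eta_2}^q\bigr).
$$
Concavity of $t\mapsto t^q$ yields $\abs{\eta_1}^q+\abs{\eta_2}^q\leq 2^{1-q}(\abs{\eta_1}+\abs{\eta_2})^q\leq 2^{1-q}\eps^q$; raising to the power $1/q$ and using $(1-q)/q=1/\gamma$ gives $\Delta_{1/\gamma}(T_\gamma(\1_{\delta\triangle\delta'}))\leq 2^{1/\gamma}\varkappa_\gamma\eps$, which combined with the previous display produces the claimed bound $2^{1+1/\gamma}\varkappa_\gamma\eps$.

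The main obstacle, modest as it is, lies in the first step: recognising that Lemma~\ref{lma.c1a} is really a statement about pointwise domination rather than about a constant majorant, and checking that its proof transfers unchanged to a non-constant $g_0$. The geometric observation that the symmetric difference of two arcs has at most two components is routine, and the rest of the argument is a direct application of \eqref{b1} combined with the concavity of $t\mapsto t^q$.
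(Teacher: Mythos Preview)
Your proof is correct and lands on the same constant, but it takes a small detour compared with the paper. The paper simply observes that $\1_\delta-\1_{\delta'}$ is already a signed sum $\pm\1_{\delta_1}\pm\1_{\delta_2}$ of indicators of the (at most two) arcs comprising $\delta\triangle\delta'$, so that
\[
T_\gamma(\1_\delta)-T_\gamma(\1_{\delta'})=\pm T_\gamma(\1_{\delta_1})\pm T_\gamma(\1_{\delta_2}),
\]
and then applies \eqref{b1} and Lemma~\ref{lma.c3} directly to this two-term sum, using the crude bound $\abs{\delta_j}<\eps$ for each $j$. Your first step --- the pointwise-domination version of Lemma~\ref{lma.c1a} --- is valid (the factorisation and Ky Fan argument indeed go through verbatim for non-constant $g_0\geq0$), but it is not needed here: the signed decomposition makes the reduction to $T_\gamma(\1_{\delta\triangle\delta'})$ superfluous. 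On the other hand, your use of concavity together with $\abs{\eta_1}+\abs{\eta_2}<\eps$ is slightly sharper than the paper's use of $\abs{\delta_j}<\eps$ individually, though both routes yield the same final bound $2^{1+1/\gamma}\varkappa_\gamma\eps$.
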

\begin{proof}
Let $\delta\Delta\delta'=\delta_1\cup\delta_2$, where $\delta_1$, $\delta_2$ are
intervals with $\abs{\delta_1}<\eps$, $\abs{\delta_2}<\eps$. 
Then 
$$
T_\gamma(\1_\delta)-T_\gamma(\1_{\delta'})
=
\pm T_\gamma(\1_{\delta_1})\pm T_\gamma(\1_{\delta_2}),
$$
where the signs depend on the relative location of $\delta$, $\delta'$. 
Using the estimate \eqref{b1}, we get
$$
\Delta_{1/\gamma}(T_\gamma(\1_\delta)-T_\gamma(\1_{\delta'}))^{\gamma/(\gamma+1)}
\leq 
\Delta_{1/\gamma}(T_\gamma(\1_{\delta_1}))^{\gamma/(\gamma+1)}
+
\Delta_{1/\gamma}(T_\gamma(\1_{\delta_2}))^{\gamma/(\gamma+1)},
$$
and so, applying Lemma~\ref{lma.c3}, we get 
$$
\Delta_{1/\gamma}(T_\gamma(\1_\delta)-T_\gamma(\1_{\delta'}))
\leq
\varkappa_\gamma(\abs{\delta_1}^{\gamma/(\gamma+1)}+\abs{\delta_2}^{\gamma/(\gamma+1)})^{1+1/\gamma}
\leq
\eps \varkappa_\gamma 2^{1+1/\gamma},
$$
as required. 
\end{proof}

Now we can prove a refined version of Lemma~\ref{lma.c2}, where the supports of $g_1$, $g_2$ 
are allowed to ``touch''.

\begin{lemma}\label{lma.c4}
Let $\delta$ and $\delta'$ be disjoint open arcs in $\bbT$: $\delta\cap\delta'=\varnothing$.
Then 
$$
T_\gamma(\1_\delta)T_\gamma(\1_{\delta'})\in\Sch_{1/2\gamma,\infty}^0.
$$
\end{lemma}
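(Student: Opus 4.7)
The plan is to reduce the statement to Lemma~\ref{lma.c2} by shrinking the two arcs slightly so that their closures become disjoint, and then controlling the resulting error terms. The new feature here, compared with Lemma~\ref{lma.c2}, is that the closures of $\delta$ and $\delta'$ may share an endpoint, so $\dist(\supp \1_\delta,\supp \1_{\delta'})$ may vanish and Lemma~\ref{lma.c2} does not apply directly.

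For each $\eps>0$ I would choose subarcs $\delta_\eps\subset\delta$ and $\delta'_\eps\subset\delta'$, obtained by trimming a small neighbourhood of each endpoint, so that $\dist(\delta_\eps,\delta'_\eps)>0$ while $\abs{\delta\setminus\delta_\eps}<\eps$ and $\abs{\delta'\setminus\delta'_\eps}<\eps$. Since $\1_{\delta'_\eps}$ is real-valued, $T_\gamma(\1_{\delta'_\eps})=T_\gamma(\1_{\delta'_\eps})^*$, so Lemma~\ref{lma.c2} applied with $g_1=\1_{\delta_\eps}$ and $g_2=\1_{\delta'_\eps}$ yields
\[
T_\gamma(\1_{\delta_\eps})T_\gamma(\1_{\delta'_\eps})\in\Sch_0\subset\Sch_{1/(2\gamma),\infty}^0.
\]
I then write the telescoping identity
\[
T_\gamma(\1_\delta)T_\gamma(\1_{\delta'})-T_\gamma(\1_{\delta_\eps})T_\gamma(\1_{\delta'_\eps})
=(T_\gamma(\1_\delta)-T_\gamma(\1_{\delta_\eps}))T_\gamma(\1_{\delta'})
+T_\gamma(\1_{\delta_\eps})(T_\gamma(\1_{\delta'})-T_\gamma(\1_{\delta'_\eps}))
=:R_\eps.
\]
Applying the multiplicative bound \eqref{b4} with $p=1/\gamma$ to each summand, together with Lemma~\ref{lma.c3a} (bounding $\Delta_{1/\gamma}$ of each difference by $2^{1+1/\gamma}\varkappa_\gamma\eps$) and Lemma~\ref{lma.c3} (bounding $\Delta_{1/\gamma}$ of $T_\gamma(\1_{\delta'})$ and $T_\gamma(\1_{\delta_\eps})$ in terms of $\abs{\delta'}$ and $\abs{\delta_\eps}$), I get $\Delta_{1/(2\gamma)}$ of each summand at most $C\eps$; the subadditivity \eqref{b1} then gives $\Delta_{1/(2\gamma)}(R_\eps)\leq C'\eps$ with $C'$ independent of $\eps$.

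To close, I invoke Ky Fan's lemma (Lemma~\ref{lma.b1}): since the main term $T_\gamma(\1_{\delta_\eps})T_\gamma(\1_{\delta'_\eps})$ lies in $\Sch_{1/(2\gamma),\infty}^0$, it contributes nothing to $\Delta_{1/(2\gamma)}$, so
\[
\Delta_{1/(2\gamma)}(T_\gamma(\1_\delta)T_\gamma(\1_{\delta'}))
=\Delta_{1/(2\gamma)}(R_\eps)\leq C'\eps.
\]
The left-hand side does not depend on $\eps$, so sending $\eps\to 0$ yields $\Delta_{1/(2\gamma)}(T_\gamma(\1_\delta)T_\gamma(\1_{\delta'}))=0$, which is equivalent to the claimed membership $T_\gamma(\1_\delta)T_\gamma(\1_{\delta'})\in\Sch_{1/(2\gamma),\infty}^0$. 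The main obstacle is really only conceptual: recognising that Lemma~\ref{lma.c3a} can be used as a quantitative perturbation bound that lets us deform the arcs apart by an arbitrarily small amount, thereby transferring the $\Sch_0$ conclusion of Lemma~\ref{lma.c2} through the touching-boundaries limit; once this perspective is in place, the rest is bookkeeping with the operator-theoretic inequalities assembled in Section~\ref{sec.b}.
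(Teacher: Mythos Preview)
Your proof is correct and follows essentially the same approach as the paper: shrink an arc to create positive distance, invoke Lemma~\ref{lma.c2} for the main piece, bound the remainder via the multiplicative estimate \eqref{b4} together with Lemma~\ref{lma.c3a}, then let $\eps\to0$. The only cosmetic difference is that you shrink both arcs, whereas the paper shrinks only $\delta$; since trimming one arc already separates the closures, the paper has a single error term $(T_\gamma(\1_\delta)-T_\gamma(\1_{\delta_\eps}))T_\gamma(\1_{\delta'})$ to estimate rather than your two-term telescoping sum, but the underlying idea is identical.
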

\begin{proof}
Let us ``shrink" $\delta$ a little: 
for $\eps>0$, let $\delta_\eps$ be an arc such that the distance between $\delta_\eps$
and $\delta'$ is positive and the symmetric difference $\delta_\eps\Delta\delta$
has a total length $<\eps$. By Lemma~\ref{lma.c2}, we have
$$
T_\gamma(\1_{\delta_\eps})T_\gamma(\1_{\delta'})\in\Sch_0\subset\Sch_{1/2\gamma,\infty}^0.
$$
By Lemma~\ref{lma.b1}, it follows that
$$
\Delta_{1/2\gamma}(T_\gamma(\1_\delta)T_\gamma(\1_{\delta'}))
=
\Delta_{1/2\gamma}\bigl((T_\gamma(\1_\delta)-T_\gamma(\1_{\delta_\eps}))T_\gamma(\1_{\delta'})\bigr).
$$
Applying the estimate \eqref{b4}, we get 
$$
\Delta_{1/2\gamma}\bigl((T_\gamma(\1_\delta)-T_\gamma(\1_{\delta_\eps}))T_\gamma(\1_{\delta'})\bigr)
\leq
\Delta_{1/\gamma}\bigl((T_\gamma(\1_\delta)-T_\gamma(\1_{\delta_\eps})\bigr)
\Delta_{1/\gamma}(T_\gamma(\1_{\delta'})).
$$
By Lemma~\ref{lma.c3a}, we get
$$
\Delta_{1/2\gamma}(T_\gamma(\1_\delta)T_\gamma(\1_{\delta'}))\leq C_\gamma\eps.
$$
Since $\eps$ can be chosen arbitrary small, we get
$$
\Delta_{1/2\gamma}(T_\gamma(\1_\delta)T_\gamma(\1_{\delta'}))=0,
$$
which is exactly what is required. 
\end{proof}

\subsection{Step functions $g$}
\begin{lemma}\label{lma.c5}
Let $\delta$ be an arc with $\abs{\delta}=2\pi/L$, $L\in\bbN$. 
Then 
$$
\Delta_{1/\gamma}(T_\gamma(\1_\delta))=
\delta_{1/\gamma}(T_\gamma(\1_\delta))=
\varkappa_\gamma/L. 
$$
\end{lemma}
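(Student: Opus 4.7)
The plan is to reduce this to the radial case $g=1$, for which the singular value asymptotics $\Delta_{1/\gamma}(T_\gamma(1))=\delta_{1/\gamma}(T_\gamma(1))=\varkappa_\gamma$ is already known from the elementary computation \eqref{a1b}. The key observation is that $L$ rotated copies of $\delta$ tile the circle and carry the same singular value counting function, so the functionals $\Delta_{1/\gamma}$, $\delta_{1/\gamma}$ must split equally among them.

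Concretely, I would set $\delta_\ell = e^{2\pi i \ell/L}\delta$ for $\ell=1,\dots,L$, in analogy with \eqref{c1}. These are pairwise disjoint open arcs of arclength $2\pi/L$ whose union covers $\bbT$ up to a finite set, so
$$
\sum_{\ell=1}^L \1_{\delta_\ell} = 1 \quad \text{almost everywhere on } \bbT.
$$
By linearity of the Toeplitz correspondence, this gives the identity
$$
T_\gamma(1)=\sum_{\ell=1}^L T_\gamma(\1_{\delta_\ell})
$$
in the Bergman space. Moreover, rotation by $e^{2\pi i\ell/L}$ induces a unitary equivalence in $B^2(\bbD)$ that conjugates $T_\gamma(\1_{\delta_1})$ into $T_\gamma(\1_{\delta_\ell})$, so all these operators share the same singular value counting function.

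Next I would verify the hypotheses of Theorem~\ref{thm.b2} with $p=1/\gamma$ applied to $A_\ell=T_\gamma(\1_{\delta_\ell})$. Since each $\1_{\delta_\ell}$ is real-valued, $T_\gamma(\1_{\delta_\ell})$ is self-adjoint, so it suffices to check $A_\ell A_j \in \Sch_{1/2\gamma,\infty}^0$ for $\ell\neq j$. The open arcs $\delta_\ell$, $\delta_j$ are disjoint (though their closures may touch at a common endpoint), which is precisely the situation handled by Lemma~\ref{lma.c4}. Hence the asymptotic orthogonality hypothesis \eqref{b9} is satisfied.

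Finally, applying Corollary~\ref{cr.b3} yields
$$
\Delta_{1/\gamma}(T_\gamma(1))=L\,\Delta_{1/\gamma}(T_\gamma(\1_\delta)), \qquad
\delta_{1/\gamma}(T_\gamma(1))=L\,\delta_{1/\gamma}(T_\gamma(\1_\delta)),
$$
and since the left sides both equal $\varkappa_\gamma$, the conclusion follows after dividing by $L$. There is no real obstacle here: all the analytic work has already been done in Lemmas~\ref{lma.c2}--\ref{lma.c4}, and what remains is the clean bookkeeping that rotational symmetry combined with asymptotic orthogonality forces the singular value count to be split equally across the $L$ rotated pieces.
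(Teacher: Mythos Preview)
Your proposal is correct and follows essentially the same argument as the paper: decompose $T_\gamma(1)$ into $L$ rotated copies of $T_\gamma(\1_\delta)$, invoke Lemma~\ref{lma.c4} for the asymptotic orthogonality, and apply Corollary~\ref{cr.b3}. The only cosmetic addition is your explicit remark that self-adjointness reduces condition~\eqref{b9} to a single product, which the paper leaves implicit.
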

\begin{proof}
Let $\delta_\ell$ be as in \eqref{c1}. 
Then $\delta_j\cap \delta_\ell=\varnothing$ for $j\not=\ell$ and 
$$
1=\sum_{\ell=1}^L \1_{\delta_\ell}\quad \text{ a.e. on $\bbT$.}
$$
Thus, 
$$
T_\gamma(1)=\sum_{\ell=1}^L T_\gamma(\1_{\delta_\ell})
$$
and by Lemma~\ref{lma.c4}
$$
T_\gamma(\1_{\delta_\ell})T_\gamma(\1_{\delta_j})\in\Sch_{1/2\gamma,\infty}^0.
$$
Thus, we can apply Corollary~\ref{cr.b3}, which yields
$$
\Delta_{1/\gamma}(T_\gamma(\1_\delta))
=
\Delta_{1/\gamma}(T_\gamma(1))/L,
$$
and similarly for the lower limits $\delta_{1/\gamma}$. 
\end{proof}

\begin{lemma}\label{lma.c6}
Let $\delta\subset\bbT$ be an arc of length $\abs{\delta}=2\pi/L$, $L\in\bbN$, 
and let $\delta_\ell$ be as in \eqref{c1}. Let
\begin{equation}
g=\sum_{\ell=1}^L c_\ell \1_{\delta_\ell}
\label{c3}
\end{equation}
for some coefficients $c_1,\dots,c_\ell\in\bbC$. Then
\begin{equation}
\Delta_{1/\gamma}(T_\gamma(g))=
\delta_{1/\gamma}(T_\gamma(g))=
\varkappa_\gamma
\int_{0}^{2\pi} \abs{g(e^{i\theta})}^{1/\gamma} \frac{d\theta}{2\pi}.
\label{c2}
\end{equation}
\end{lemma}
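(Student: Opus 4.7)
The plan is to decompose $T_\gamma(g)=\sum_{\ell=1}^L c_\ell T_\gamma(\1_{\delta_\ell})$ and apply the asymptotic orthogonality result, Theorem~\ref{thm.b2}, with $p=1/\gamma$ and $A_\ell:=c_\ell T_\gamma(\1_{\delta_\ell})$.

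First I would verify the hypothesis \eqref{b9} of Theorem~\ref{thm.b2}. Since $\1_{\delta_\ell}$ is real valued, each $T_\gamma(\1_{\delta_\ell})$ is self-adjoint, so
$$
A_\ell^* A_j = \overline{c_\ell}\,c_j\,T_\gamma(\1_{\delta_\ell})T_\gamma(\1_{\delta_j}),
\qquad
A_\ell A_j^* = c_\ell\,\overline{c_j}\,T_\gamma(\1_{\delta_\ell})T_\gamma(\1_{\delta_j}).
$$
For $\ell\neq j$ the arcs $\delta_\ell,\delta_j$ are disjoint open arcs, so Lemma~\ref{lma.c4} gives $T_\gamma(\1_{\delta_\ell})T_\gamma(\1_{\delta_j})\in\Sch_{1/(2\gamma),\infty}^0$, which is exactly the hypothesis \eqref{b9}.

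Next, Lemma~\ref{lma.c5} combined with the simple scaling identity $n(s;cB)=n(s/|c|;B)$ yields, for each $\ell$, the pointwise limit
$$
\lim_{s\to 0} s^{1/\gamma} n(s;A_\ell)
=
|c_\ell|^{1/\gamma}\,\lim_{s\to0} (s/|c_\ell|)^{1/\gamma}n(s/|c_\ell|;T_\gamma(\1_{\delta_\ell}))
=
\frac{\varkappa_\gamma}{L}\,|c_\ell|^{1/\gamma}.
$$
Since each summand has a genuine limit, the function $s\mapsto s^{1/\gamma}\sum_{\ell=1}^L n(s;A_\ell)$ has a limit as $s\to 0$ equal to $\frac{\varkappa_\gamma}{L}\sum_{\ell=1}^L |c_\ell|^{1/\gamma}$. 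Consequently the $\limsup$ and $\liminf$ in Theorem~\ref{thm.b2} coincide and
$$
\Delta_{1/\gamma}(T_\gamma(g))=\delta_{1/\gamma}(T_\gamma(g))=\frac{\varkappa_\gamma}{L}\sum_{\ell=1}^L |c_\ell|^{1/\gamma}.
$$

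Finally, I would identify the right-hand side with the stated integral. Since $g$ takes the constant value $c_\ell$ on the arc $\delta_\ell$ of normalised length $1/L$,
$$
\int_0^{2\pi}|g(e^{i\theta})|^{1/\gamma}\frac{d\theta}{2\pi}
=\sum_{\ell=1}^L \frac{|c_\ell|^{1/\gamma}}{L},
$$
which matches the coefficient above and gives \eqref{c2}. The one place to be a little careful is the exchange between $\limsup$ of a sum and the sum of limits; this is legitimate precisely because each term separately converges thanks to Lemma~\ref{lma.c5}. Otherwise the argument is a clean application of Theorem~\ref{thm.b2} and Lemmas~\ref{lma.c4}--\ref{lma.c5}, and I do not anticipate any serious obstacle.
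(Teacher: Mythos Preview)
Your argument is correct and follows essentially the same route as the paper: the same decomposition $T_\gamma(g)=\sum_\ell c_\ell T_\gamma(\1_{\delta_\ell})$, the same appeal to Lemma~\ref{lma.c4} for asymptotic orthogonality, and then Theorem~\ref{thm.b2} together with Lemma~\ref{lma.c5}. The only cosmetic difference is that the paper bounds $\Delta_{1/\gamma}$ from above and $\delta_{1/\gamma}$ from below separately, whereas you observe directly that each term $s^{1/\gamma}n(s;A_\ell)$ has a genuine limit and hence so does the sum; both conclude the same equality.
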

\begin{proof}
We have
$$
T_\gamma(g)=\sum_{\ell=1}^L c_\ell T_\gamma(\1_{\delta_\ell}), 
$$
and 
$$
T_\gamma(\1_{\delta_\ell})T_\gamma(\1_{\delta_j})\in\Sch_{1/2\gamma,\infty}^0, 
\quad
j\not=\ell.
$$
By Theorem~\ref{thm.b2}, we get
\begin{multline*}
\Delta_{1/\gamma}(T_\gamma(g))
\leq
\sum_{\ell=1}^L \Delta_{1/\gamma}(c_\ell T_\gamma(\1_{\delta_\ell}))
=
\sum_{\ell=1}^L \abs{c_\ell}^{1/\gamma} \Delta_{1/\gamma}(T_\gamma(\1_{\delta_\ell}))
\\
=
\frac1L \sum_{\ell=1}^L \abs{c_\ell}^{1/\gamma} \Delta_{1/\gamma}(T_\gamma(1))
=
\varkappa_\gamma 
\int_{0}^{2\pi} \abs{g(e^{i\theta})}^{1/\gamma} \frac{d\theta}{2\pi}
\end{multline*}
and similarly 
$$
\delta_{1/\gamma}(T_\gamma(g))
\geq
\frac1L \sum_{\ell=1}^L \abs{c_\ell}^{1/\gamma} \delta_{1/\gamma}(T_\gamma(1))
=
 \varkappa_\gamma 
\int_{0}^{2\pi} \abs{g(e^{i\theta})}^{1/\gamma} \frac{d\theta}{2\pi}.
$$
\end{proof}

\subsection{Concluding the proof}

\begin{lemma}\label{lma.c7}
Let $g\in C(\bbT)$. 
Then formula \eqref{c2} holds true. 
\end{lemma}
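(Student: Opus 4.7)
The plan is to approximate the continuous symbol $g$ uniformly by step functions of the type already handled in Lemma~\ref{lma.c6} and pass to the limit in the asymptotic functionals $\Delta_{1/\gamma}$ and $\delta_{1/\gamma}$. Since $g\in C(\bbT)$ is uniformly continuous, for each $L\in\bbN$ we partition $\bbT$ into $L$ equal arcs $\delta_1,\dots,\delta_L$ of the form \eqref{c1} with $|\delta_\ell|=2\pi/L$, choose a point $\zeta_\ell\in\delta_\ell$, and set
$$
g_L=\sum_{\ell=1}^L g(\zeta_\ell)\1_{\delta_\ell}.
$$
Then $\|g-g_L\|_{L^\infty(\bbT)}\to 0$ as $L\to\infty$.

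Next I would control the perturbation $T_\gamma(g)-T_\gamma(g_L)=T_\gamma(g-g_L)$. By Lemma~\ref{lma.c1a} applied with $g_0=\|g-g_L\|_\infty$,
$$
\Delta_{1/\gamma}(T_\gamma(g-g_L))\leq 2\varkappa_\gamma\|g-g_L\|_\infty^{1/\gamma}\xrightarrow[L\to\infty]{}0.
$$
Combining this with the perturbation estimates \eqref{b2} and \eqref{b3} (taken with $p=1/\gamma$, so that the exponent $1/(p+1)$ equals $\gamma/(\gamma+1)$) yields
$$
\bigl|\Delta_{1/\gamma}(T_\gamma(g))^{\gamma/(\gamma+1)}-\Delta_{1/\gamma}(T_\gamma(g_L))^{\gamma/(\gamma+1)}\bigr|\leq \Delta_{1/\gamma}(T_\gamma(g-g_L))^{\gamma/(\gamma+1)}\to 0
$$
as $L\to\infty$, and the same bound with $\delta_{1/\gamma}$ in place of $\Delta_{1/\gamma}$ on the left.

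By Lemma~\ref{lma.c6}, for each $L$ the approximant satisfies
$$
\Delta_{1/\gamma}(T_\gamma(g_L))=\delta_{1/\gamma}(T_\gamma(g_L))=\varkappa_\gamma\int_0^{2\pi}|g_L(e^{i\theta})|^{1/\gamma}\frac{d\theta}{2\pi}.
$$
Since the map $z\mapsto |z|^{1/\gamma}$ is continuous on $\bbC$ and hence uniformly continuous on the bounded set $\{|z|\leq\|g\|_\infty+1\}$, the uniform convergence $g_L\to g$ forces $|g_L|^{1/\gamma}\to|g|^{1/\gamma}$ uniformly on $\bbT$, and therefore
$$
\int_0^{2\pi}|g_L(e^{i\theta})|^{1/\gamma}\frac{d\theta}{2\pi}\longrightarrow\int_0^{2\pi}|g(e^{i\theta})|^{1/\gamma}\frac{d\theta}{2\pi}.
$$

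Letting $L\to\infty$ in the two displayed relations above, the common limit of $\Delta_{1/\gamma}(T_\gamma(g_L))$ and $\delta_{1/\gamma}(T_\gamma(g_L))$ must equal both $\Delta_{1/\gamma}(T_\gamma(g))$ and $\delta_{1/\gamma}(T_\gamma(g))$, yielding formula \eqref{c2} for $g$. I do not foresee a genuine obstacle here: all the analytic work has already been absorbed into Lemmas \ref{lma.c1a} and \ref{lma.c6} and the perturbation estimates \eqref{b2}--\eqref{b3}, so the only thing to verify is that the approximation error $\|g-g_L\|_\infty^{1/\gamma}$, elevated to the exponent $\gamma/(\gamma+1)$, tends to zero --- which is immediate.
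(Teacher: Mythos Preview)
Your proof is correct and follows essentially the same approach as the paper: approximate $g$ uniformly by step functions of the type in Lemma~\ref{lma.c6}, use Lemma~\ref{lma.c1a} to control $\Delta_{1/\gamma}(T_\gamma(g-g_L))$, and pass to the limit via the perturbation estimates \eqref{b2}--\eqref{b3}. The only cosmetic difference is that the paper indexes the approximants by the error $\eps=\|g-g_\eps\|_\infty$ rather than by the number $L$ of arcs, and is slightly less explicit about the convergence of the right-hand side.
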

\begin{proof}
For any $\eps>0$, there exists a step function $g_\eps$ of the form \eqref{c3}
such that $\norm{g-g_\eps}_\infty\leq \eps$. 
By Lemma~\ref{lma.c6}, the identity
\begin{equation}
\Delta_{1/\gamma}(T_\gamma(g_\eps))=
\delta_{1/\gamma}(T_\gamma(g_\eps))=
\varkappa_\gamma
\int_{-\pi}^\pi \abs{g_\eps(e^{i\theta})}^{1/\gamma} \frac{d\theta}{2\pi}
\label{c2a}
\end{equation}
holds true for all $\eps>0$; our task is to pass to the limit as $\eps\to0$. 
It is obvious that one can pass to the limit in the right side of \eqref{c2a}. 
Consider the left side; by  Lemma~\ref{lma.c1a} we have
$$
\Delta_{1/\gamma}(T_\gamma(g-g_\eps))
\leq
2\varkappa_\gamma\eps^{1/\gamma}.
$$
Applying the estimate \eqref{b2}, we get
$$
\abs{
\Delta_{1/\gamma}(T_\gamma(g))^{\gamma/(\gamma+1)}
-
\Delta_{1/\gamma}(T_\gamma(g_\eps))^{\gamma/(\gamma+1)}
}
\leq
\Delta_{1/\gamma}(T_\gamma(g-g_\eps))^{\gamma/(\gamma+1)}
\leq 
C_\gamma
\eps^{1/(\gamma+1)}.
$$
It follows that
$$
\lim_{\eps\to0}\Delta_{1/\gamma}(T_\gamma(g_\eps))=\Delta_{1/\gamma}(T_\gamma(g)).
$$
Similarly, using \eqref{b3} instead of \eqref{b2}, we obtain
$$
\lim_{\eps\to0}\delta_{1/\gamma}(T_\gamma(g_\eps))=\delta_{1/\gamma}(T_\gamma(g)).
$$
Now we can pass to the limit $\eps\to0$ in \eqref{c2a}, which gives the desired result. 
\end{proof}

\begin{proof}[Proof of Theorem~\ref{thm.a1}]
Write $\f=\f_0+\f_1$, where
$$
\f_0(re^{i\theta})=(1-r)^\gamma g(e^{i\theta}),  
$$
and 
$$
\f_1(z)=o((1-\abs{z})^\gamma), \quad \abs{z}\to1.
$$
By the previous step, we have that $T(\f_0)$ satisfies the required 
asymptotics. It remains to prove that $T(\f_1)\in \Sch_{1/\gamma,\infty}^0$. 
In order to do this, for any $\eps>0$ write 
$\varphi_1=\psi_\eps+\wt \psi_\eps$, where 
$\wt \psi_\eps$ is supported inside the smaller disk $\abs{z}<a$, $a<1$, 
and $\psi_\eps$ satisfies the estimate
$$
\abs{\psi_\eps(z)}\leq \eps(1-\abs{z})^\gamma, \quad \abs{z}<1.
$$
It is easy to see that $T(\wt \psi_\eps)\in\Sch_0$. 
On the other hand, by  Lemma~\ref{lma.c1a}, we have
$$
\Delta_{1/\gamma}(T(\psi_\eps))
\leq
2\varkappa_\gamma \eps^{1/\gamma}. 
$$
By Lemma~\ref{lma.b1}, we get
$$
\Delta_{1/\gamma}(T(\f_1))
=
\Delta_{1/\gamma}(T(\psi_\eps))\leq 2\varkappa_\gamma \eps^{1/\gamma}.
$$
Since $\eps$ is arbitrary, we get $\Delta_{1/\gamma}(T(\f_1))=0$, which means
$T(\f_1)\in \Sch_{1/\gamma,\infty}^0$. 
\end{proof}

\section{Proof of Theorem~\ref{thm.a3}}\label{sec.e}

Let $\f(e^{i\theta})=(1-\abs{z})^\gamma b(e^{i\theta})$ with $b$ as in \eqref{e4}; consider the corresponding
Toeplitz operator $T(\f)$ in $B^2(\bbD)$. 
Let $T=\{t_{n,m}\}_{n,m=0}^\infty$ be the matrix of $T(\f)$ in the orthogonal basis 
$\{\sqrt{k+1}z^k\}_{k=0}^\infty$:
$$
t_{j,k}=\sqrt{j+1}\sqrt{k+1}(T(\f)z^j,z^k).
$$
We have
$$
t_{j,j+m}=0 \quad\text{ if $\abs{m}>M$.}
$$
Further, for $\abs{m}\leq M$ we have
$$
t_{j,j+m}
=
b_m\sqrt{j+1}\sqrt{j+m+1}((1-\abs{z})^\gamma e^{im\theta}z^j,z^{j+m})
=
2^{-\gamma}\Gamma(\gamma+1)b_m j^{-\gamma}+o(j^{-\gamma})
$$
as $j\to\infty$. 
This calculation shows that 
$$
2^\gamma(\Gamma(\gamma+1))^{-1}T
=
A+A', 
$$
where $A'$ is a banded matrix with 
$$
a'_{j,j+m}=o(j^{-\gamma}), \quad j\to\infty
$$
for all $\abs{m}\leq M$. 
Considering $A'$ as a sum of $2M+1$ matrices, each of which has non-zero 
entries only on the ``off-diagonal" $k=j+m$, it is easy to see that 
$A'\in\Sch_{1/\gamma,\infty}^0$. 
Thus, by Lemma~\ref{lma.b1}, 
$$
\Delta_{1/\gamma}(A)
=
\Delta_{1/\gamma}(2^\gamma(\Gamma(\gamma+1))^{-1}T)
=
2(\Gamma(\gamma+1))^{-1/\gamma}\Delta_{1/\gamma}(T).
$$
Finally, by Theorem~\ref{thm.a1},
$$
\Delta_{1/\gamma}(A)
=
2(\Gamma(\gamma+1))^{-1/\gamma}\Delta_{1/\gamma}(T)
=
\int_{-\pi}^\pi \abs{b(e^{i\theta})}^{1/\gamma}\frac{d\theta}{2\pi}.
$$
The same calculation applies to $\delta_{1/\gamma}(A)$. 
This completes the proof of Theorem~\ref{thm.a3}.

\end{document}